\theoremstyle{plain}
\newtheorem{theorem}{Theorem}
\newtheorem{lemma}[theorem]{Lemma}
\newtheorem{claim}[theorem]{Claim}
\theoremstyle{definition}
\newtheorem{definition}[theorem]{Definition}
\theoremstyle{remark}
\newtheorem{remark}[theorem]{Remark}
\numberwithin{equation}{section}
\numberwithin{theorem}{section}
\newenvironment{caselist}
{
\begin{list}{\bf{Case \theenumi.}}
{
 \usecounter{enumi} 
\settowidth{\itemindent}{Case 3}
\addtolength{\itemindent}{\labelsep} 
\settowidth{\labelwidth}{Case 3}
\setlength{\leftmargin}{0pt}
}
}
{\end{list}}
\newenvironment{enumeq}
{ 
\begin{enumerate} 
\setcounter{enumi}{\value{equation}}
\renewcommand{\theenumi}{(\arabic{section}.\arabic{enumi})}
}
{ 
\end{enumerate}
\setcounter{equation}{\value{enumi}}
 }
\newcommand{\range}{\operatorname{range}}
\begin{document}
\title[Minimal clones generated by majority operations]{Minimal clones
generated by majority operations}
\author[T. Waldhauser]{Tam\'{a}s Waldhauser}
\address{Bolyai Institute\\
University of Szeged\\
Aradi v\'{e}rtan\'{u}k tere 1, H6720, Szeged, Hungary}
\email{twaldha@math.u-szeged.hu}
\thanks{This research was supported by Ministry of Culture and Education of
Hungary, grant No. FKFP 0877/1997.}
\keywords{Clones, majority operations}
\subjclass[2000]{08A40}

\begin{abstract}
The minimal majority functions of the four-element set are determined.
\end{abstract}

\maketitle

\section{Introduction}

A set $C$ of finitary operations on a set $A$ is a \textit{clone} if it is
closed under composition of functions and contains all projections. In this
paper we shall be concerned only with clones on a finite set.

The set of all finitary operations on $A$ is a clone as well as the set of
all projections. These are the largest and the smallest clones on $A$, the
latter is often called the trivial clone.

The clone generated by a set $F$ of finitary functions on $A$ is the
intersection of all clones containing $F$, i.e. the smallest clone
containing $F$. This clone is denoted by $[F]$. If $F=\{f\}$ then we simply
write $[f]$. Clearly $[F]$ consists of those functions which can be obtained
from the elements of $F$ and from the projections by a finite number of
compositions. In other words, $[F]$ is the set of term functions of the
algebra $\langle A,F\rangle$.

We say that the $n$-ary function $f$ \textit{preserves} the relation $\rho
\subseteq A^{k}$ if for all $a_{i,j}\in A$ $\left( i=1,\ldots ,k,~j=1,\ldots
,n\right) $ 
\begin{equation*}
\left( a_{11},a_{21},\ldots ,a_{k1}\right) ,\left( a_{12},a_{22},\ldots
,a_{k2}\right) ,\ldots ,\left( a_{1n},a_{2n},\ldots ,a_{kn}\right) \in \rho
\end{equation*}%
implies 
\begin{equation*}
\left( f\left( a_{11},a_{12},\ldots ,a_{1n}\right) ,f\left(
a_{21},a_{22},\ldots ,a_{2n}\right) ,\ldots ,f\left( a_{k1},a_{k2},\ldots
,a_{kn}\right) \right) \in \rho .
\end{equation*}

Preserving a relation is inherited when composing functions:\smallskip 
\begin{equation}
\text{If }f\text{ preserves a relation }\rho \text{ and }g\in \lbrack f]%
\text{, then }g\text{ also preserves }\rho \text{.}  \label{1.1}
\end{equation}

An important special case is that of unary relations: $f$ preserves $%
B\subseteq A$ iff $B$ is closed under $f$. If $f$ preserves all subsets of $%
A $ then we say that $f$ is \textit{conservative} (cf. \cite{c5}).

A clone is \textit{minimal} if it has no proper subclones except for the
trivial one. On finite sets every clone contains a minimal one (cf. \cite{c6}%
). Obviously a clone is minimal iff it is generated by every nontrivial
member of it. (By a nontrivial function we mean a function which is not a
projection.) If $C$ is a minimal clone and $f$ is nontrivial, and of minimum
arity in $C$ then we say that $f$ is a \textit{minimal function} (cf. \cite%
{c7} p.408).

By a theorem of I. G. Rosenberg \cite{c7}, every minimal clone (on a finite
set) is generated by a nontrivial minimal function $f$, for which one of the
following holds: 
\renewcommand{\theenumi}{\arabic{enumi})}
\renewcommand{\labelenumi}{\theenumi}%

\begin{enumerate}
\item \label{unary}$f$ is unary and $f^{2}(x)\approx f(x)$ or $%
f^{p}(x)\approx x$ for some prime $p$.

\item \label{binary}$f$ is binary idempotent, i.e. $f(x,x)\approx x$.

\item \label{majority}$f$ is ternary majority, i.e. $f(x,x,y)\approx
f(x,y,x)\approx f(y,x,x)\approx x$.

\item \label{semiprojection}$f$ is a semiprojection, i.e. there exists an $i$
such that $f(x_{1},x_{2},\ldots ,x_{n})=x_{i}$ whenever the arguments are
not pairwise distinct.

\item \label{minority}$f(x,y,z)=x+y+z$ where $\langle A,+\rangle $ is a
Boolean group.
\end{enumerate}

Note that in each case $f$ cannot generate a nontrivial function which is of
lesser arity than $f$. This means that $f$ is a minimal function iff $[f]$
is a minimal clone. In cases \ref{unary} and \ref{minority} the conditions
ensure the minimality of $f$, while in the other cases they do not.

In \cite{c4} Post described all clones on a two-element set, in \cite{c1} B.
Cs\'{a}k\'{a}ny determined the minimal clones of a three-element set. For
the four-element case binary minimal clones were described by B. Szczepara
in \cite{c8}.

Conservative minimal majority and binary functions were determined on any
finite set by B. Cs\'{a}k\'{a}ny in \cite{c2}.

In this paper we prove the following description of all the minimal majority
functions of a four-element set.

\begin{theorem}
\label{thm 1.1}If $C$ is a minimal clone on a four-element set $A$ and it
contains a majority function, then $C=[f]$ where $f$ is either conservative
or $\langle A,f\rangle \cong \langle \{1,2,3,4\},M_{i}\rangle $ for some $%
i\in \{1,2,3\}$ (see the table below).%
\begin{equation*}
\begin{tabular}{cr}
\begin{tabular}{|c|ccc|}
\hline
& $M_{1}$ & $M_{2}$ & $M_{3}$ \\ \hline
$(1,2,3)$ & $4$ & $4$ & $3$ \\ 
$(2,3,1)$ & $4$ & $2$ & $3$ \\ 
$(3,1,2)$ & $4$ & $3$ & $3$ \\ \hline
$(2,1,3)$ & $4$ & $2$ & $4$ \\ 
$(1,3,2)$ & $4$ & $4$ & $4$ \\ 
$(3,2,1)$ & $4$ & $3$ & $4$ \\ \hline
$\{1,2,4\}$ & $4$ & $4$ & $4$ \\ \hline
$\{1,3,4\}$ & $4$ & $4$ & $4$ \\ \hline
$(4,2,3)$ & $4$ & $4$ & $3$ \\ 
$(2,3,4)$ & $4$ & $2$ & $3$ \\ 
$(3,4,2)$ & $4$ & $3$ & $3$ \\ \hline
$(2,4,3)$ & $4$ & $2$ & $4$ \\ 
$(4,3,2)$ & $4$ & $4$ & $4$ \\ 
$(3,2,4)$ & $4$ & $3$ & $4$ \\ \hline
\end{tabular}
& 
\parbox{6 cm}{\setlength{\parindent}{12 pt} \setlength{\leftskip}{12 pt}
\normalfont{
The middle two rows mean that if $\left\{ a,b,c\right\} =\left\{ 1,2,4\right\} $ or $\left\{ 1,3,4\right\} $, then $M_{i}(a,b,c)=4$ for $i=1,2,3$. For the triplets not listed in the table the majority rule defines the value of the functions.}}%
\end{tabular}%
\end{equation*}
\end{theorem}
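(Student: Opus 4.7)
Assume $C=[f]$ is a minimal clone with $f$ a majority operation on $A=\{1,2,3,4\}$; since conservative minimal majority functions on an arbitrary finite set are already classified by Cs\'ak\'any \cite{c2}, I may assume $f$ is \emph{not} conservative. The first observation is that every $B\subseteq A$ with $|B|\le 2$ is automatically closed under $f$: the majority identities force $f(x,y,z)\in\{x,y,z\}$ whenever two of the arguments coincide, and in a two-element set some two of any three elements must coincide. Hence the non-closedness of $f$ is witnessed by some $3$-element subset of $A$, and there are only four of these. For each such triple $T$, the value $f(a,b,c)$ on an ordering of $T$ is either in $T$ or equals the unique element of $A\setminus T$.

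I would then set up a combinatorial parametrisation: writing $T_x:=A\setminus\{x\}$, let $E_x\subseteq S_3$ be the set of orderings of $T_x$ that $f$ sends to $x$, so that $f$ is non-conservative precisely when some $E_x$ is nonempty. The natural action of $\mathrm{Sym}(A)\cong S_4$ on $A$ carries the tuple $(E_1,E_2,E_3,E_4)$ to that of an isomorphic algebra, so the classification amounts to listing the orbits of this action that can actually occur. Strong restrictions come from (\ref{1.1}): any relation preserved by $f$ (unary, binary, or a partition) is preserved by every derived term, and this is what will eventually distinguish minimal clones from non-minimal ones. I would enumerate cases by the cardinalities of the $E_x$ and their geometric pattern; I expect a short list of surviving candidates, among which sit the three operations described by $M_1$, $M_2$, $M_3$.

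The remaining task splits into two halves: verifying minimality for each $M_i$, and eliminating every non-isomorphic candidate. For minimality I would produce, for each $M_i$, a collection of invariant relations rich enough to force every ternary term to be a projection or a variable-permuted copy of $f$, and to forbid any nontrivial unary or binary term; by (\ref{1.1}) the failure of any such term to preserve one of the exhibited relations pins it down to a projection or to $f$ itself. For the elimination half I would display, for each offending candidate, a short depth-two composition of $f$ with projections that is nontrivial yet strictly simpler than $f$ (for instance a function failing the majority law in a controlled way, or agreeing with a projection on enough tuples to generate a proper subclone), contradicting minimality. The principal obstacle, I expect, is exactly this last step: because many non-isomorphic non-conservative majorities survive the combinatorial reduction, and because minimality of each $M_i$ requires controlling \emph{every} term, the proof demands careful bookkeeping of the values of $f$ on the four distinguished triples --- precisely the information encoded by the tables $M_1$, $M_2$, $M_3$.
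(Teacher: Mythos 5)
There is a genuine gap: what you have written is a programme rather than a proof, and two of its three planned steps, as described, would not go through. The combinatorial heart of the theorem is the enumeration you defer with ``I expect a short list of surviving candidates.'' A priori a majority operation on $\{1,2,3,4\}$ is free on the $24$ ordered triples of distinct elements, so there are $4^{24}$ candidates, and the $\mathrm{Sym}(A)$-action cuts this by a factor of at most $24$; your parametrisation by the sets $E_x$ records only which triples map outside their support and loses the values inside it, which you also need. The paper makes the search feasible only through a chain of reductions you do not have: Theorem \ref{thm 2.2} replaces $f$ by an iterate $\widehat{f}\in[f]$ satisfying the idempotency identity \eqref{*}; Lemma \ref{lemma 2.3}, Theorem \ref{thm 2.4} and Lemma \ref{lemma 3.2} then force $f(\langle abc\rangle)$ to be a singleton, or all of $\{a,b,c\}$ with $f$ acting as first projection there, or (the one genuinely new phenomenon, leading to $M_2$) a very constrained mixed pattern; and Claims \ref{claim 3.10}--\ref{claim 3.13} force cyclic commutativity and preservation of all but one $3$-subset. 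Without an analogue of \eqref{*} the case analysis does not close up.

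The other two steps are misdirected. For minimality you propose to force ``every ternary term to be a projection or a variable-permuted copy of $f$''; this is false for $M_2$ and $M_3$, whose clones contain $3$ and $8$ distinct majority operations respectively, not all obtained from $f$ by permuting variables. The correct target is that every nontrivial $g\in[f]$ satisfies $f\in[g]$, which the paper gets in Theorem \ref{thm 3.14} from the invariant partition $\{1,4\}\mid\{2\}\mid\{3\}$, the fact that $1\notin\range(g)$, and the known minimality of $f\mid_{\{2,3,4\}}$ on a three-element set. (Forbidding nontrivial unary and binary terms is automatic: every nontrivial member of a majority clone is a near-unanimity operation.) For the elimination step, the same fact shows your proposed witnesses --- ``a function failing the majority law in a controlled way'' or a term ``agreeing with a projection on enough tuples'' --- cannot exist in $[f]$; the only available certificate of non-minimality is a majority $g\in[f]$ with $f\notin[g]$, detected via Lemma \ref{lemma 2.1} by $g$ preserving a subset or partition that $f$ does not, and producing such $g$ requires compositions far deeper than depth two (the paper uses $f_{zy}$, $\widehat{f}_{zy}$, $g^{(2)}$, $g^{(3)}$ and the like). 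As it stands the proposal identifies the shape of the problem but supplies neither the reduction machinery nor the enumeration that constitute the proof.
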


\section{Majority functions on finite sets}

If $C$ is a clone which is generated by a majority function then we shall
briefly say that $C$ is a majority clone.

Let $A$ be a finite set and $f$ be a majority function on $A$. We define the
range of $f$ in the following way:%
\begin{equation*}
\range(f)=\left\{ \,f(a,b,c)\,\mid \,a,b,c\in A\text{ are pairwise
distinct}\,\right\} \text{.}
\end{equation*}

A simple induction argument shows that if $g$ is a nontrivial function in a
majority clone, then $g$ is a so-called \textit{near-unanimity function},
i.e. 
\begin{equation*}
g(y,x,x,\ldots ,x,x)\approx g(x,y,x,\ldots ,x,x)\approx \ldots \approx
g(x,x,x,\ldots ,x,y)\approx x
\end{equation*}%
(cf. (7) of \cite{c2}).

In Rosenberg's theorem $f$ cannot be a near-unanimity function except for
the majority case, so any minimal subclone of a majority clone is again a
majority clone. This means that in order to prove the minimality of a
majority clone $C$, it suffices to show that any two majority functions in $%
C $ generate each other.\ 

To show the nonminimality of a clone $[f]$ we will make use of the following
facts.

\begin{lemma}
\label{lemma 2.1}Let $f$ be a majority function on $A$.

\begin{enumeq}
\item \label{2.1}If $f$ is a minimal function and it preserves $B\subseteq A$
then $f\mid _{B}$ must be a minimal function on $B$.

\item \label{2.2}If a nontrivial $g\in \left[ f\right] $ preserves some $%
B\subseteq A$ but $f$ does not, then $\left[ f\right] $ is not minimal.

\item \label{2.3}If the range of some nontrivial $g\in \lbrack f]$ does not
contain an element which belongs to the range of $f$, then $[f]$ is not
minimal. (Cf. Corollary 1.25 of \cite{c9}.)
\end{enumeq}
\end{lemma}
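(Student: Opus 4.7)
Here is my plan.

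For (2.1) the key tool is the restriction homomorphism. Since $f$ preserves $B$, property (1.1) forces every $h\in[f]$ to preserve $B$, so $h\mapsto h|_{B}$ is a well-defined clone homomorphism from $[f]$ onto $[f|_{B}]$. To verify that $f|_{B}$ is a minimal function on $B$, I would take any nontrivial $h'\in[f|_{B}]$, lift it to some $h\in[f]$ with $h|_{B}=h'$ (which exists because $[f|_{B}]$ is the image of the restriction map), and note that $h$ must itself be nontrivial: a projection on $A$ restricts to the same projection on $B$. Minimality of $[f]$ then gives $f\in[h]$, and restricting to $B$ yields $f|_{B}\in[h|_{B}]=[h']$. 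Thus every nontrivial member of $[f|_{B}]$ generates $f|_{B}$, which is what is needed (the case $|B|\leq 1$ is vacuous).

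Parts (2.2) and (2.3) are contrapositives starting from the same observation: if $[f]$ were minimal and $g\in[f]$ is nontrivial, then $[g]=[f]$, hence $f\in[g]$. For (2.2), since $g$ preserves $B$, by (1.1) so does every function in $[g]$, and in particular $f$ preserves $B$, contradicting the hypothesis. For (2.3), the content I need is the implication ``$f\in[g]\ \Longrightarrow\ \range(f)\subseteq \range(g)$''; granted this, the hypothesis that $\range(g)$ misses some element of $\range(f)$ is immediately contradicted. This implication is precisely Corollary~1.25 of \cite{c9}, so I would cite it.

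The main obstacle is this last implication: (2.1) and (2.2) reduce to clean applications of (1.1), whereas propagating a range statement through compositions of near-unanimity operations requires a more delicate argument. A self-contained proof would induct on the term $t$ that realizes $f$ from $g$, showing that at an internal node $g(t_{1},\ldots,t_{n})$ evaluated at a pairwise distinct tuple $(a_{1},\ldots,a_{m})$ the tuple of values $(t_{1}(\bar a),\ldots,t_{n}(\bar a))$ is either again pairwise distinct (so that the output lies in $\range(g)$ by definition) or has enough repetitions to invoke near-unanimity and collapse to an already present entry. The subtle case is the ``partial repetition'' one for higher-arity $g$, where near-unanimity only constrains tuples with at most one non-repeated coordinate; this is the point at which it is preferable to invoke \cite{c9} rather than redo the case analysis.
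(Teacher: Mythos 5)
Your arguments for (2.1) and (2.2) are correct and essentially the paper's own: the paper compresses (2.1) into the remark that composing and restricting commute, which is exactly your restriction homomorphism, and (2.2) is the same application of (\ref{1.1}). For (2.3) you take a genuinely different route. You reduce the claim to the inclusion $\range(f)\subseteq \range(g)$ whenever $f\in\lbrack g]$, and you either import this from Corollary 1.25 of \cite{c9} or sketch a term induction whose ``partial repetition'' case you rightly flag as delicate. The paper sidesteps both with a one-line trick: for a majority function $g$, an element $a$ lies outside $\range(g)$ if and only if $g$ preserves the equivalence relation whose blocks are $\{a\}$ and $A\setminus\{a\}$. (Indeed, if $a\notin\range(g)$ then $g(x_{1},x_{2},x_{3})=a$ exactly when at least two arguments equal $a$, a condition invariant under that relation; conversely, a distinct triple with value $a$ produces an easy violation of preservation.) With this observation (2.3) becomes another instance of (\ref{1.1}): if $[f]$ were minimal then $f\in\lbrack g]$ would preserve the same relation, forcing $a\notin\range(f)$, a contradiction. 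The paper's version keeps the proof self-contained and uniform across all three parts, whereas yours either leans on the external corollary (which the paper only cites as a cross-reference) or requires the more careful induction you describe but do not complete.
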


\begin{proof}
~

\begin{enumerate}
\item[$\protect\ref{2.1}$] Composing functions and restricting functions
commute.

\item[$\protect\ref{2.2}$] This follows from (\ref{1.1}).

\item[$\protect\ref{2.3}$] This also follows from (\ref{1.1}) since $a\,{%
\not\in }\range(g)$ iff $g$ preserves the equivalence relation whose
blocks are $\{a\}$ and $A\backslash \{a\}$.\qedhere
\end{enumerate}
\end{proof}

We can use \ref{2.1} for three-element set $B$ because we know the minimal
majority clones for such set. These are described in \cite{c1} as follows.
If $C$ is a minimal majority clone on a three-element set $A$, then there
exists an $f\in C$ such that 
\begin{equation*}
\langle A,f\rangle \cong \langle \{1,2,3\},m_{i}\rangle \text{ for some }%
i=1,2,3\text{,}
\end{equation*}%
where $m_{1},m_{2},m_{3}$ are the following majority functions.

For $\{x_{0},x_{1},x_{2}\}=\{1,2,3\}$ we have

\begin{enumerate}
\item[ ] $m_{1}(x_{0},x_{1},x_{2})=1$

\item[ ] $m_{2}(x_{0},x_{1},x_{2})=x_{0}$

\item[ ] $m_{3}(x_{0},x_{1},x_{2})=x_{i+1}$ if $x_{i}=1$ (subscripts taken
modulo $3$).
\end{enumerate}

The clones generated by $m_{1},m_{2},m_{3}$ contain 1,3, and 8 majority
functions respectively; these are shown in the following table.

\begin{equation*}
\begin{tabular}{|c|c|ccc|cccccccc|}
\hline
& $m_{1}$ & $m_{2}$ &  &  & $m_{3}$ &  &  &  &  &  &  &  \\ \hline
$(1,2,3)$ & $1$ & $1$ & $2$ & $3$ & $2$ & $3$ & $2$ & $2$ & $3$ & $2$ & $3$
& $3$ \\ 
$(2,3,1)$ & $1$ & $2$ & $3$ & $1$ & $2$ & $2$ & $2$ & $3$ & $3$ & $3$ & $3$
& $2$ \\ 
$(3,1,2)$ & $1$ & $3$ & $1$ & $2$ & $2$ & $2$ & $3$ & $2$ & $3$ & $3$ & $2$
& $3$ \\ \hline
$(2,1,3)$ & $1$ & $2$ & $1$ & $3$ & $3$ & $3$ & $2$ & $3$ & $2$ & $2$ & $3$
& $2$ \\ 
$(1,3,2)$ & $1$ & $1$ & $3$ & $2$ & $3$ & $2$ & $3$ & $3$ & $2$ & $3$ & $2$
& $2$ \\ 
$(3,2,1)$ & $1$ & $3$ & $2$ & $1$ & $3$ & $3$ & $3$ & $2$ & $2$ & $2$ & $2$
& $3$ \\ \hline
\end{tabular}%
\end{equation*}%
\medskip

Conservative minimal majority clones are described in \cite{c2} as follows:

If $C$ is a conservative minimal majority clone then there exists an $f\in C$
such that for every three-element $B\subseteq A$ there is an $i_{B}\in
\{1,2,3\}$ such that 
\begin{equation*}
\langle B,f\mid _{B}\rangle \cong \langle \{1,2,3\},m_{i_{B}}\rangle .
\end{equation*}

Now we formulate a theorem which helps us reducing the number of functions
to be checked, when searching for minimal clones.

\begin{theorem}
\label{thm 2.2}Let $f$ be a majority function on a finite set $A$. Then
there exists a majority function $g\in \lbrack f]$ which satisfies the
following identity:%
\begin{equation}
g\bigl(\,g(x,y,z)\,,\,g(y,z,x)\,,\,g(z,x,y)\,\bigr)\approx g(x,y,z). 
\tag{$\ast $}  \label{*}
\end{equation}
\end{theorem}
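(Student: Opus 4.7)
Since the identity $(\ast)$ is the fixed-point equation $\Phi(g)=g$ for the operator
$$\Phi(g)(x,y,z):=g\bigl(g(x,y,z),\,g(y,z,x),\,g(z,x,y)\bigr),$$
my approach is to produce such a fixed point inside $[f]$. Two basic observations come first: $\Phi(g)$ is itself a majority function whenever $g$ is---indeed $\Phi(g)(x,x,y)=g(g(x,x,y),g(x,y,x),g(y,x,x))=g(x,x,x)=x$, and the other two near-unanimity patterns are analogous---and $\Phi(g)\in[g]$ by construction. Iterating $\Phi$ from $f$ therefore yields a sequence $f_{0}:=f,\ f_{n+1}:=\Phi(f_{n})$ of majority functions in $[f]$ which, as the set of majority operations on the finite set $A$ is finite, must eventually enter a cycle.

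To track progress along the iteration I would introduce the complexity measure
$$\nu(g)\;:=\;\sum_{O}\bigl|\{g(a,b,c),\,g(b,c,a),\,g(c,a,b)\}\bigr|,$$
where $O=\{(a,b,c),(b,c,a),(c,a,b)\}$ ranges over the cyclic orbits of pairwise-distinct triples in $A^{3}$. A quick case check on the profile size $|S_{g}(O)|\in\{1,2,3\}$ shows $|S_{\Phi(g)}(O)|\le|S_{g}(O)|$ always, with $|S_{\Phi(g)}(O)|=1$ whenever $|S_{g}(O)|\le2$. Hence $\nu$ is non-increasing under $\Phi$ and must stabilise; from the moment of stabilisation every orbit satisfies $|S_{g}(O)|\in\{1,3\}$, $g$ is constant on every non-cyclic orbit, and the induced profile map $\Phi_{g}$ becomes a self-map of the set of cyclic orbits.

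\textbf{Main obstacle.} $\nu$-stability is \emph{not} yet $\Phi(g)=g$. A direct rewriting shows that $(\ast)$ is equivalent to the conjunction of (a) $g$ being constant on every non-cyclic orbit, and (b) on every cyclic orbit $O$, $g$ acting as the \emph{first}-coordinate projection on the orbit $\Phi_{g}(O)$. Condition (a) is delivered by $\nu$-stabilisation, but (b) can fail even for a stable $g$: the iteration may get trapped in a nontrivial $\Phi$-cycle---on a three-element set, for instance, the second- and third-coordinate projections on distinct triples form a genuine $2$-cycle under $\Phi$. To break such cycles I would pass to a minimal majority subclone $C\subseteq[f]$ (which is again a majority clone by the discussion preceding Lemma~\ref{lemma 2.1}) and use, together with the iterates of $\Phi$, the argument-shifted variants $g(y,z,x),\ g(z,x,y)\in[g]$: a careful analysis of how the cyclic shift $\sigma$ acts on the profile map $\Phi_{g}$ on fixed orbits shows that some composition of these ingredients achieves (a) and (b) simultaneously, producing the desired $g\in[f]$ satisfying $(\ast)$.
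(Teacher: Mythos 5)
Your reduction of $(\ast)$ to a fixed-point problem for the operator $\Phi(g)=g\bigl(g(x,y,z),g(y,z,x),g(z,x,y)\bigr)$ is the right starting point, and you correctly diagnose the one real difficulty: iterating $\Phi$ from $f$ computes the repeated squares $f,\Phi(f),\Phi^{2}(f),\ldots$, and this sequence need only enter a cycle, not reach a fixed point (your example of the second and third projections on a three-element set swapping under $\Phi$ is exactly such a $2$-cycle). But the proposal stops precisely where the proof has to happen. The sentence ``a careful analysis of how the cyclic shift acts \ldots\ shows that some composition of these ingredients achieves (a) and (b) simultaneously'' is an unproved assertion, not an argument; and passing to a minimal majority subclone buys you nothing here, since minimality does not control the period of the cycle your iteration falls into (and the theorem is needed in the paper before any minimality results are available). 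The $\nu$-monotonicity observation, while correct, does not address the obstruction either: after $\nu$ stabilises, the troublesome orbits are exactly those of profile size $3$, and that is where the cycling happens. So as written the proof is incomplete at its decisive step.

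The gap closes with a small but essential change of viewpoint, which is what the paper does. Put $(g\ast h)(x,y,z)=g\bigl(h(x,y,z),h(y,z,x),h(z,x,y)\bigr)$ and define $f^{(1)}=f$, $f^{(k+1)}=f\ast f^{(k)}$; an easy induction gives $f^{(k+l)}=f^{(k)}\ast f^{(l)}$, so $k\mapsto f^{(k)}$ is a homomorphism from $\langle\mathbb{N},+\rangle$ onto the finite monogenic semigroup $D=\{f^{(k)}:k\in\mathbb{N}\}$. Every finite semigroup contains an idempotent, so some $k$ satisfies $f^{(k)}\ast f^{(k)}=f^{(k)}$, which is literally $(\ast)$ for $g=f^{(k)}$ (and each $f^{(k)}$ is a majority function in $[f]$, as you already checked for $\Phi$). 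Concretely: your iteration only sees the exponents $2^{n}$, and if the eventual cycle of $D$ has odd period $p>1$ then no power of $2$ is divisible by $p$; taking instead $k$ to be a large enough multiple of $p$ lands on the idempotent. In your three-element example this is $f^{(3)}$, the first projection on distinct triples, which does satisfy $(\ast)$.
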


\begin{proof}
We define functions $f^{(k)}\ (k\geq 1)$ in the following way:%
\begin{align*}
f^{(1)}(x,y,z)& =f(x,y,z), \\
f^{(k+1)}(x,y,z)& =f\bigl(\,f^{(k)}(x,y,z)\,,\,f^{(k)}(y,z,x)\,,%
\,f^{(k)}(z,x,y)\,\bigr).
\end{align*}

We assert that 
\begin{equation*}
f^{(k+l)}(x,y,z)\approx f^{(k)}\bigl(\,f^{(l)}(x,y,z)\,,\,f^{(l)}(y,z,x)\,,%
\,f^{(l)}(z,x,y)\,\bigr)
\end{equation*}%
for $k,l\geq 1$.

This can be proved by induction on $k$; the proof is left to the reader. Let
us define a binary operation $\ast $ on the set $D=\left\{ f^{(k)}:k\in 
\mathbb{N}\hspace{0cm}\right\} $ as follows: 
\begin{equation*}
\bigl(f^{(k)}\ast f^{(l)}\bigr)(x,y,z)=f^{(k)}\bigl(\,f^{(l)}(x,y,z)\,,%
\,f^{(l)}(y,z,x)\,,\,f^{(l)}(z,x,y)\,\bigr).
\end{equation*}

The above assertion means that the map $k\mapsto f^{(k)}$ is a homomorphism
from $\langle\mathbb{N}\hspace{0cm},+\rangle$ to $\langle D,\ast\rangle$. So
the latter is a finite semigroup, hence it has an idempotent element, say $%
f^{(k)}\ast f^{(k)}=f^{(k)}$. And this is just the desired identity for $%
g=f^{(k)}$.
\end{proof}

Now we introduce some more notation. The function $g=f^{(k)}$ which
corresponds to $f$ in the theorem will be denoted by $\widehat{f}$. We put $%
\langle abc\rangle =\{(a,b,c),(b,c,a),(c,a,b)\},$ and we will use the symbol 
$f\mid _{\langle abc\rangle }\equiv u$ to mean that $%
f(a,b,c)=f(b,c,a)=f(c,a,b)\nolinebreak =\nolinebreak u$, and $f\mid
_{\langle abc\rangle }\nolinebreak =\nolinebreak p$ \ to mean that $%
f(a,b,c)\nolinebreak =\nolinebreak a,\allowbreak f(b,c,a)\nolinebreak
=\nolinebreak b,\allowbreak f(c,a,b)\nolinebreak =\nolinebreak c$.

The following lemma tells us what identity (\ref{*}) means for a majority
function.

\begin{lemma}
\label{lemma 2.3}Let $f$ be a majority function satisfying \eqref{*} and let 
$a,b,c$ be pairwise distinct elements of $A$. Let $u=f(a,b,c),\ v=f(b,c,a),\
w=f(c,a,b)$. Then $\left\vert \left\{ u,v,w\right\} \right\vert \neq 2$ and
if $u,v,w$ are pairwise different, then $f\mid _{\langle uvw\rangle }=p$.
\end{lemma}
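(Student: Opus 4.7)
The plan is to exploit identity $(\ast)$ by evaluating it at the three cyclic permutations of $(a,b,c)$. Substituting $x=a,y=b,z=c$ into $(\ast)$ gives $f(u,v,w)=u$. Substituting $x=b,y=c,z=a$ gives $f(v,w,u)=v$, and substituting $x=c,y=a,z=b$ gives $f(w,u,v)=w$. These three equations are the whole engine of the proof. In particular, once we know $u,v,w$ are pairwise distinct, these three equations are exactly the statement $f\mid_{\langle uvw\rangle}=p$, so the second claim of the lemma is immediate.

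To rule out $\left\vert\{u,v,w\}\right\vert=2$, I would argue by contradiction, splitting into the three cases according to which pair coincides. If $u=v\neq w$, then the majority property forces $f(w,u,v)=f(w,u,u)=u$, but we established $f(w,u,v)=w\neq u$. If $u=w\neq v$, then $f(v,w,u)=f(v,u,u)=u$, contradicting $f(v,w,u)=v\neq u$. If $v=w\neq u$, then $f(u,v,w)=f(u,v,v)=v$, contradicting $f(u,v,w)=u\neq v$. Each case collapses immediately, so none of the three coincidence patterns is possible, whence $\left\vert\{u,v,w\}\right\vert\neq 2$.

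There is no real obstacle here; the substance is entirely in noticing that $(\ast)$ applied at the three cyclic shifts of $(a,b,c)$ simultaneously produces both the $p$-pattern on $\langle uvw\rangle$ (when all three are distinct) and a clash with the majority identity (when exactly two coincide). The only thing to be careful about is keeping the cyclic order straight when matching $f(v,w,u)=v$ and $f(w,u,v)=w$ against the majority reductions, but the three subcases are symmetric under cyclic rotation, so in fact handling one of them and invoking symmetry would suffice.
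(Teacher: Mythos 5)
Your proof is correct and follows essentially the same route as the paper: evaluate $(\ast)$ at the three cyclic shifts of $(a,b,c)$ to get $f(u,v,w)=u$, $f(v,w,u)=v$, $f(w,u,v)=w$, which gives the $p$-pattern directly when $u,v,w$ are distinct and clashes with the majority identity when exactly two coincide. The paper merely compresses the two-element case to a single representative ``without loss of generality,'' which, as you note, is justified by cyclic symmetry.
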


\begin{proof}
To prove the first statement, let us suppose (without loss of generality)
that $u=v\neq w$. Then (\ref{*}) for $x=c,\ y=a,\ z=b$ yields that $%
f(w,u,v)=w$, contradicting the majority property of $f$.

The second statement of the lemma follows similarly from (\ref{*}).
\end{proof}

We can say a bit more then Lemma \ref{lemma 2.3} when $f$ is a minimal
function.

\begin{theorem}
\label{thm 2.4}If $f$ is a minimal majority function satisfying \eqref{*}
and $u=f(a,b,c),\ $\linebreak $v=f(b,c,a),\ w=f(c,a,b)$ are pairwise
different then $f${$\mid _{\langle uvw\rangle }$}$=p$\ and also {$f\mid
_{\langle vuw\rangle }$}$=p$.
\end{theorem}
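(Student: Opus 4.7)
The first assertion $f\mid_{\langle uvw\rangle}=p$ is exactly the second statement of Lemma~\ref{lemma 2.3}. The real work is the second assertion: writing $X=f(v,u,w)$, $Y=f(u,w,v)$, $Z=f(w,v,u)$, I must show $(X,Y,Z)=(v,u,w)$.

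The first move is to apply Lemma~\ref{lemma 2.3} a second time, now to the pairwise distinct triple $(v,u,w)$. This gives $|\{X,Y,Z\}|\ne 2$, and in the three-distinct case further yields $f\mid_{\langle XYZ\rangle}=p$. Thus the target reduces to pinning down $\{X,Y,Z\}=\{u,v,w\}$ with the correct cyclic ordering, while ruling out the constant (cardinality-$1$) case.

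To close this reduction, I would use minimality of $f$ via Lemma~\ref{lemma 2.1}(\ref{2.1}). If the set $B=\{u,v,w\}$ is closed under $f$, then $f\mid_B$ must be a minimal majority function on the three-element set $B$, so up to isomorphism $f\mid_B$ is one of $m_1,m_2,m_3$. The first assertion already shows that $f\mid_B$ is first projection on the cyclic orbit $\langle uvw\rangle$, and inspection of the displayed tables of the clones $[m_1]$, $[m_2]$, $[m_3]$ reveals that the only minimal majority function on a three-element set acting as first projection on a cyclic orbit is $m_2$ itself---which is first projection on both orbits. This gives $f\mid_{\langle vuw\rangle}=p$ at once.

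The main obstacle is therefore showing that $B=\{u,v,w\}$ is closed under $f$. If some value among $X,Y,Z$ were outside $B$, the plan is to produce a nontrivial $g\in[f]$ that violates minimality: either $g$ preserves a subset that $f$ does not, invoking Lemma~\ref{lemma 2.1}(\ref{2.2}), or $\range(g)$ misses an element of $\range(f)$, invoking Lemma~\ref{lemma 2.1}(\ref{2.3}). Natural candidates are compositions of $f$ with variable permutations, or $\widehat{h}$ of such a composition $h$ (via Theorem~\ref{thm 2.2}) so that Lemma~\ref{lemma 2.3} applies to the resulting function as well, yielding the required additional constraint. Handling the constant case $X=Y=Z=s\notin B$ and the misaligned three-distinct case by this route is the delicate combinatorial step I expect to consume most of the proof.
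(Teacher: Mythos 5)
Your reduction is the right one --- the first orbit is Lemma~\ref{lemma 2.3}, and the whole theorem follows once you know that $B=\{u,v,w\}$ is closed under $f$, via Lemma~\ref{lemma 2.1}~\ref{2.1} and the three-element classification (where, as you note, only the $m_2$-type selectors act as a projection on a cyclic orbit of three distinct values, and the constraint $f(u,v,w)=u$ forces the first selector, hence $f\mid_{\langle vuw\rangle}=p$). But the step you defer --- ``producing a nontrivial $g\in[f]$ that violates minimality'' by a case analysis on where $X,Y,Z$ land --- is precisely the content of the theorem, and you have not carried it out; as written the proposal is a plan, not a proof, with the essential witness missing.

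The paper closes this gap with a single composition and no case analysis: take $g(x,y,z)=f\bigl(f(x,y,z),f(x,z,y),x\bigr)$. One checks $g$ is again a majority function (hence nontrivial), and that $g$ preserves $\{u,v,w\}$ \emph{unconditionally}, using only $f\mid_{\langle uvw\rangle}=p$ and the majority law: on the known orbit, e.g.\ $g(u,v,w)=f\bigl(u,f(u,w,v),u\bigr)=u$ because the first and third arguments coincide; on the other orbit, e.g.\ $g(v,u,w)=f\bigl(f(v,u,w),f(v,w,u),v\bigr)=f\bigl(f(v,u,w),v,v\bigr)=v$ because the second and third arguments coincide. So the values $X,Y,Z$ never need to be examined at all. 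By Lemma~\ref{lemma 2.1}~\ref{2.2}, minimality of $f$ then forces $f$ itself to preserve $\{u,v,w\}$, and your final classification step takes over. I would suggest you look for this kind of ``self-stabilizing'' superposition --- one in which the majority identity pins two arguments equal on every input you care about --- before committing to the combinatorial case split you anticipated; here it eliminates that work entirely.
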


\begin{proof}
By the previous lemma we have {$f\mid _{\langle uvw\rangle }$}$=p$. Now the
nontrivial superposition $g(x,y,z)=f(f(x,y,z),f(x,z,y),x)$ preserves $%
\{u,v,w\}$ hence $f$ does too, and then from the description of the minimal
majority functions on the three-element set we get the conclusion of the
theorem.
\end{proof}

\section{The four-element case}

We have seen that every conservative minimal majority clone is generated by
a function $f$ having the following property:%
\begin{equation}
f\mid _{\langle abc\rangle }\equiv u\text{ or }f\mid _{\langle abc\rangle }=p
\label{3.1}
\end{equation}%
for every $a,b,c\in A$ with a suitable $u$ (depending of course, on $a,b,c$).

One would hope that it holds for nonconservative clones too. In the first
part of this section we are going to try to prove this for a four-element $A$%
. It will turn out, that the conjecture is not true, but (in the
four-element case) there is essentially only one exception. In the second
part we determine the minimal ones among the functions satisfying property (%
\ref{3.1}), and in the third part we prove the minimality of the clones we
have found.

\subsection*{3.1}

$~\smallskip $

Let $S$ denote the set of those majority functions on the set $A=\{1,2,3,4\}$
for which (\ref{3.1}) holds for any $a,b,c\in A$.

In this section we will show that a minimal majority function which
satisfies $(\ast )$ must belong to the set $S$, or it is isomorphic to $%
M_{2} $. Since we will consider the values of the functions on the set $%
\{1,2,3\}$, we introduce one more notation. Let $\left[ p,q,r;s,t,u\right] $
denote the set of majority functions $f$ on $A$ for which $%
f(1,2,3)\nolinebreak =\nolinebreak p,\ \allowbreak f(2,3,1)\nolinebreak
=\nolinebreak q,\ \allowbreak f(3,1,2)\nolinebreak =\nolinebreak r,\
\allowbreak f(2,1,3)\nolinebreak =\nolinebreak s,\allowbreak \
f(1,3,2)\nolinebreak =\nolinebreak t,\ \allowbreak f(3,2,1)\nolinebreak
=\nolinebreak u$. If we do not want to specify all these six values of $f$,
than we will use $\ast $ to indicate an arbitrary element of $A$. For
example $f\in \left[ 4,\ast ,\ast ;\ast ,\ast ,\ast \right] $ means just
that $f(1,2,3)\nolinebreak =\nolinebreak 4$. The letters $a,b,c,d$ will
always denote arbitrary distinct elements of $A$, i.e. $\{1,2,3,4\}=A=%
\{a,b,c,d\}$.

First we define and examine a superposition which we will use frequently
later on. For a ternary function $f$ let $f_{x}$, $f_{y}$, $f_{z}$ stand for
the composite functions where the first, second resp. third variable of $f$
is replaced by $f$ itself.%
\begin{align*}
f_{x}(x,y,z)& =f(f(x,y,z),y,z) \\
f_{y}(x,y,z)& =f(x,f(x,y,z),z) \\
f_{z}(x,y,z)& =f(x,y,f(x,y,z))
\end{align*}

We will briefly write $f_{zy}$\ instead of $(f_{z})_{y}$. We will also use
the convention that lower indices have priority to upper ones. So $%
f_{zy}^{(2)}$ \ means $(f_{zy})^{(2)}$ \ and not $(f^{(2)})_{zy}$, and also $%
\widehat{f}_{zy}$ \ stands for $\widehat{(f_{zy})}$.

The proof of the following lemma is just a straightforward calculation, so
we omit it.

\begin{lemma}
\label{lemma 3.1}If $f(a,b,c)\neq d$ then $f_{zy}(a,b,c)=f(a,b,c)$. If this
is not the case, then $f_{zy}(a,b,c)=f(a,b,d)$ if the latter does not equal $%
d$. If it does, then $f_{zy}(a,b,c)=f(a,d,c)$ if it is not $b$. If $%
f(a,d,c)=b$ then $f_{zy}(a,b,c)=f(a,d,b)$.
\end{lemma}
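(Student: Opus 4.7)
The plan is to unravel $f_{zy}(a,b,c)$ step by step from the definitions and run a short case analysis on a few intermediate values. Writing $u=f(a,b,c)$, $v=f_{z}(a,b,c)=f(a,b,u)$, and $w=f(a,v,c)$, the definition of $f_{zy}=(f_{z})_{y}$ gives
\[
f_{zy}(a,b,c)=f_{z}(a,v,c)=f\bigl(a,v,f(a,v,c)\bigr)=f(a,v,w),
\]
so the four conclusions in the lemma correspond to the four possible exit points of an if-then ladder driven successively by $u$, $v$, and $w$.

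First I would handle the case $u\neq d$, where $u\in\{a,b,c\}$. In each of the three subcases the majority identity $f(x,x,y)=f(x,y,x)=f(y,x,x)=x$ forces $v=f(a,b,u)=u$, then $w=f(a,u,c)=u$, and finally $f(a,v,w)=u$. This gives the first sentence of the lemma.

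Next I would assume $u=d$, so that $v=f(a,b,d)$. If $v\neq d$ then $v\in\{a,b,c\}$, and a three-way subcase check (again using only the majority identity on the outer two applications of $f$) yields $f_{zy}(a,b,c)=v=f(a,b,d)$, which is the second sentence. If moreover $v=d$, then $w=f(a,d,c)$; whenever $w\neq b$, meaning $w\in\{a,c,d\}$, the majority rule collapses $f(a,d,w)$ to $w$, giving the third sentence; and if $w=b$ then $f_{zy}(a,b,c)=f(a,d,b)$ is read off directly from $f(a,v,w)$, giving the fourth.

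I do not expect a genuine obstacle here: every simplification is a one-step invocation of the majority identity, and the only real work is keeping the roughly dozen subcases organised. This is presumably what the authors mean when they call the proof a straightforward calculation and omit it.
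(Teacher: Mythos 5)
Your unravelling of $f_{zy}(a,b,c)=f(a,v,w)$ with $v=f(a,b,u)$ and $w=f(a,v,c)$ is correct, and the case ladder checks out in every subcase (a few steps use the definitions of $u,v,w$ rather than the majority identity --- e.g.\ $f(a,b,u)=u$ when $u=c$, or $f(a,b,d)=v$ in the subcase $v=b$ --- but the conclusions are unaffected). This is exactly the straightforward calculation the paper declares and omits, so there is nothing to compare beyond noting that your computation is the intended one.
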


From now on $f$ will always denote an arbitrary majority function on $A$,
satisfying (\ref{*}). In the following lemma we prove a nice property of $f$%
, then through five claims we reach the main result of this section, which
is stated in Theorem \ref{thm 3.8}. Let us recall that $\langle abc\rangle $
is just the set $\{(a,b,c),\,(b,c,a),\,(c,a,b)\}$, hence $f(\langle
abc\rangle )$ denotes $\{f(a,b,c),\,f(b,c,a),\,f(c,a,b)\}$.

\begin{lemma}
\label{lemma 3.2}If $f$ is minimal and $f(\langle abc\rangle)\subseteq
\{a,b,c\}$ then either {$f\mid_{\langle abc\rangle}$}$=p$\ and {$f\mid
_{\langle bac\rangle}$}$=p$\ or {$f\mid_{\langle abc\rangle}$}$\equiv u$\
and {$f\mid_{\langle bac\rangle}$}$\equiv v$\ for some $u,v\in A$.
\end{lemma}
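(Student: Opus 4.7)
The plan is to set $u = f(a,b,c)$, $v = f(b,c,a)$, $w = f(c,a,b)$ and $u' = f(b,a,c)$, $v' = f(a,c,b)$, $w' = f(c,b,a)$, and to apply Lemma~\ref{lemma 2.3} to each of $(a,b,c)$ and $(b,a,c)$ to get $|\{u,v,w\}|,|\{u',v',w'\}| \in \{1,3\}$. Together with the hypothesis $\{u,v,w\} \subseteq \{a,b,c\}$, this sets up a four-way case analysis on these two sizes.

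Two of the cases are direct. If both sizes equal $1$, then $f|_{\langle abc\rangle} \equiv u$ and $f|_{\langle bac\rangle} \equiv u'$, which is the second alternative. If both equal $3$, then $\{u,v,w\} = \{a,b,c\}$ and Theorem~\ref{thm 2.4} applied to $(a,b,c)$ gives $f|_{\langle uvw\rangle} = p$ and $f|_{\langle vuw\rangle} = p$; a brief check of the six possible assignments of $\{a,b,c\}$ to the ordered triple $(u,v,w)$ shows that only $(u,v,w)=(a,b,c)$ is consistent with these identities and the definitions of $u,v,w$, so $f|_{\langle abc\rangle} = p$ and $f|_{\langle bac\rangle} = p$, the first alternative. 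The same forcing rules out $|\{u,v,w\}|=3$, $|\{u',v',w'\}|=1$, because it would imply $f|_{\langle bac\rangle} = p$ and hence $|\{u',v',w'\}| = 3$.

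The remaining, more delicate case is $|\{u,v,w\}|=1$ and $|\{u',v',w'\}|=3$. Setting $u_0 := u = v = w \in \{a,b,c\}$, Theorem~\ref{thm 2.4} applied to $(b,a,c)$ makes $f$ the first projection on $\{u',v',w'\}$. If $\{u',v',w'\}=\{a,b,c\}$, this would force both $u_0 = f(a,b,c) = a$ and $u_0 = f(b,c,a) = b$, a contradiction; hence $d \in \{u',v',w'\}$, and $\{u',v',w'\}$ is one of $\{a,b,d\}, \{a,c,d\}, \{b,c,d\}$. In particular $f$ does not preserve $\{a,b,c\}$.

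To rule out this last possibility I would invoke the minimality of $f$ via part~\ref{2.2} of Lemma~\ref{lemma 2.1}: it suffices to exhibit a nontrivial majority $g \in [f]$ that \emph{does} preserve $\{a,b,c\}$. The choice of $g$ depends on the specific 3-subset: one can take $g = f_z$ when $\{u',v',w'\} = \{a,b,d\}$, $g = f_y$ when $\{u',v',w'\} = \{b,c,d\}$, and $g(x,y,z) = f(z,y,f(x,y,z))$ when $\{u',v',w'\} = \{a,c,d\}$. Each such $g$ is immediately a majority (and hence nontrivial), and a direct calculation on the six pairwise-distinct triples in $\{a,b,c\}^3$, using $f|_{\langle abc\rangle} \equiv u_0$ together with the first-projection behaviour of $f$ on $\{u',v',w'\}$ supplied by Theorem~\ref{thm 2.4}, shows $g$ preserves $\{a,b,c\}$. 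This subcase-by-subcase verification is the purely computational but genuinely technical heart of the argument.
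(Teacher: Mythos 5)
Your overall architecture is essentially the paper's: use Lemma~\ref{lemma 2.3} and Theorem~\ref{thm 2.4} to reduce everything to the one problematic configuration ($f\mid_{\langle abc\rangle}\equiv u_0$ while $f(\langle bac\rangle)$ is a three-element set containing $d$), and then kill that configuration by producing a nontrivial $g\in[f]$ that preserves $\{a,b,c\}$ although $f$ does not, contradicting minimality via \ref{2.2}. The three easy cases are handled correctly, and your use of Theorem~\ref{thm 2.4} to exclude $\{u',v',w'\}=\{a,b,c\}$ is a clean shortcut past the paper's appeal to the three-element classification. But the final step has a genuine gap: the witnesses you name do not preserve $\{a,b,c\}$, and the ``direct calculation'' you defer to cannot be completed from the stated hypotheses. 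Concretely, take $\{u',v',w'\}=\{a,c,d\}$ with $u_0=a$ and $f(c,b,a)=d$; then your $g(x,y,z)=f(z,y,f(x,y,z))$ gives $g(a,b,c)=f(c,b,f(a,b,c))=f(c,b,a)=d\notin\{a,b,c\}$. The other two choices fail for a related reason: e.g.\ for $\{u',v',w'\}=\{a,b,d\}$ with $f(a,c,b)=d$ one gets $f_z(a,c,b)=f(a,c,d)$, and the triple $(a,c,d)$ lies neither in $\{a,b,c\}^3$ nor among the distinct triples of $\{a,b,d\}$, so its value is constrained by nothing you have established and may perfectly well be $d$; similarly $f_y(a,c,b)=f(a,d,b)$ is undetermined in the $\{b,c,d\}$ case.

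The information you actually have in this case --- $f\mid_{\langle abc\rangle}\equiv u_0$ plus first-projection behaviour on the distinct triples of $\{u',v',w'\}$ --- is not enough to evaluate an arbitrary one-level superposition on all six distinct triples of $\{a,b,c\}$, so any correct witness must be engineered so that every outer occurrence of $f$ either receives two equal arguments (and is settled by the majority rule) or receives a triple lying entirely inside one of the two controlled sets. This is exactly what the paper does: after normalizing by a cyclic permutation of the variables (which preserves \eqref{*} and the hypotheses), it uses the superposition $g(x,y,z)=f(f(x,y,z),f(x,z,y),x)$ from Theorem~\ref{thm 2.4}, whose third slot is the bare variable $x$, and passes to $g^{(2)}$ in the subcases where $g$ itself still produces a stray $d$; the iteration lets the majority rule absorb the remaining unknown values. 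To repair your proof you would either have to switch to witnesses of this kind, or add a further case analysis on the undetermined values of $f$ (e.g.\ on $f(a,c,d)$), neither of which is present in your write-up.
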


\begin{proof}
The set $f(\langle abc\rangle )$ has three or one elements by Lemma \ref%
{lemma 2.3}. If it has three elements then it is $\{a,b,c\}$, and then by
Theorem \ref{thm 2.4} we have {$f\mid _{\langle abc\rangle }$}$=p$\ and {$%
f\mid _{\langle bac\rangle }$}$=p$. In the latter case we may suppose {$%
f\mid _{\langle abc\rangle }$}$\equiv a$. If $d\notin f(\langle bac\rangle )$
then $f$ preserves $\{a,b,c\}$ and then the description of the minimal
majority functions on the three-element set yields {$f\mid _{\langle
bac\rangle }$}$\equiv v$. If $a\in f(\langle bac\rangle )$ then we permute
cyclically the variables to have $f(b,a,c)=a$, and then $g^{(2)}$ preserves $%
\{a,b,c\}$ for the superposition $g$ of Theorem \ref{thm 2.4}, contradicting
the minimality of $f$. Finally, if $a\notin f(\langle bac\rangle )$ but $%
d\in f(\langle bac\rangle )$ then $f(\langle bac\rangle )=\{b,c,d\}$. Now we
may suppose $f(b,a,c)\nolinebreak =\nolinebreak c,\allowbreak \
f(a,c,b)\nolinebreak =\nolinebreak d,\ \allowbreak f(c,b,a)\nolinebreak
=\nolinebreak b$ or $f(b,a,c)\nolinebreak =\nolinebreak b,\ \allowbreak
f(a,c,b)\nolinebreak =\nolinebreak d,\ \allowbreak f(c,b,a)\nolinebreak
=\nolinebreak c$ after a cyclic permutation of variables. In the first case $%
g$, in the second case $g^{(2)}$ shows that $f$ is not minimal, since they
preserve $\{a,b,c\}$.
\end{proof}

\renewcommand{\theenumi}{(\arabic{enumi})}
\renewcommand{\labelenumi}{\theenumi}%

\begin{claim}
\label{claim 3.3}In either of the following four cases $f$ is not minimal.

\begin{enumerate}
\item \label{claim 3.3 case 1}$f\in \left[ 4,2,1;\ast ,\ast ,\ast \right] $

\item \label{claim 3.3 case 2}$f\in \left[ 4,1,2;\ast ,\ast ,\ast \right] $

\item $f\in \left[ 4,1,3;\ast ,\ast ,\ast \right] $

\item $f\in \left[ 4,3,1;\ast ,\ast ,\ast \right] $
\end{enumerate}
\end{claim}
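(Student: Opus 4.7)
The plan is to derive, in each of the four cases, a nontrivial $g\in[f]$ that witnesses the non-minimality of $[f]$ via Lemma~\ref{lemma 2.1}. First, assume $f$ is minimal. In each case the values $u=f(1,2,3)$, $v=f(2,3,1)$, $w=f(3,1,2)$ are pairwise distinct, so Theorem~\ref{thm 2.4} applies and pins down six values of $f$: both $f\mid_{\langle uvw\rangle}=p$ and $f\mid_{\langle vuw\rangle}=p$. The three-element set $\{u,v,w\}$ equals $\{1,2,4\}$ in cases (1)-(2) and $\{1,3,4\}$ in cases (3)-(4); on the cyclic triples of this subset, $f$ then acts by the first-projection pattern, so for example in case~(1) we have $f(1,2,4)=1$, $f(2,1,4)=2$, $f(1,4,2)=1$, $f(4,2,1)=4$, $f(2,4,1)=2$, $f(4,1,2)=4$.

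The next step is to evaluate a carefully chosen superposition on the cyclic triples $\langle 123\rangle$. The natural candidates are the operations $f_z$, $f_{zy}$, and $\widehat f$. Applying Lemma~\ref{lemma 3.1} together with the six pinned-down values yields explicit outputs inside $\{u,v,w\}\setminus\{4\}$; in case~(1) one computes $f_z(1,2,3)=f(1,2,4)=1$, $f_z(2,3,1)=f(2,3,2)=2$ and $f_z(3,1,2)=f(3,1,1)=1$ by majority, so $f_z$ takes only the values $1$ and $2$ on $\langle 123\rangle$, whereas $f$ takes the value $4\in\range(f)$ there. Analogous computations apply in the remaining three cases, each time producing a candidate $g$ whose values on $\langle 123\rangle$ avoid $4$.

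The remaining—and main—obstacle is that the values of $f$ on the triples not pinned down by Theorem~\ref{thm 2.4} (namely the reverse triples $(2,1,3),(1,3,2),(3,2,1)$ and the triples on $\{a,b,c\}$ involving the fourth element outside $\{u,v,w\}$) are not yet determined, so one must still show that $4$ is missing from $\range(g)$ \emph{globally} and not just on $\langle 123\rangle$. A case split on these unknowns is required, but Lemma~\ref{lemma 2.3} constrains each cyclic triple to produce a value-set of cardinality $1$ or $3$, which keeps the number of subcases under control. In each subcase one verifies, by direct computation using Lemma~\ref{lemma 3.1}, that either $g$ has $4\notin\range(g)$ (so non-minimality follows from part~\ref{2.3} of Lemma~\ref{lemma 2.1}) or $g$ preserves a proper subset of $A$ not preserved by $f$ (so non-minimality follows from part~\ref{2.2}). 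When neither of these occurs one iterates the construction—replacing $g$ by $\widehat{g}$ or $g_{zy}$—or invokes Lemma~\ref{lemma 3.2} together with the classification of three-element minimal majority clones to rule out the remaining configurations. I expect this case-by-case verification, rather than any single conceptual step, to be the main technical difficulty.
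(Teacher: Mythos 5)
Your setup is the same as the paper's: assume minimality, apply Theorem~\ref{thm 2.4} to the pairwise distinct values $4,2,1$ (resp.\ $4,1,2$, etc.) to pin down $f$ on the six triples of $\{u,v,w\}$, and then try to exhibit a nontrivial composition that preserves $\{1,2,3\}$ or whose range misses $4$. The computation $f_z(1,2,3)=1$, $f_z(2,3,1)=2$, $f_z(3,1,2)=1$ is correct. But the proof stops exactly where the work begins. The candidate $f_z$ does \emph{not} in general preserve $\{1,2,3\}$: for instance if $f(3,2,1)=4$ and $f(3,2,4)=4$ then $f_z(3,2,1)=4$, and nothing you have derived rules this out. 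This is precisely why the paper works with $f_{zy}$ (Lemma~\ref{lemma 3.1} gives up to three ``retries''), and even then an exceptional branch survives ($f(3,2,1)=f(3,2,4)=4$ together with $f(3,4,1)=4$, or $f(3,4,1)=2$, $f(3,4,2)=4$), which forces the analysis of $f(\langle 213\rangle)$ via Lemma~\ref{lemma 2.3}, and in the hardest subcase $f\mid_{\langle 213\rangle}\equiv 4$ requires the composition $f_{zy}(y,z,f_{zy}(x,y,z))$ --- a function outside the toolkit ($f_z$, $f_{zy}$, $\widehat{f}$, $\widehat{g}$, $g_{zy}$) you enumerate. Declaring that ``in each subcase one verifies\dots'' and that the case analysis ``keeps the number of subcases under control'' is a statement of intent, not a proof; the entire content of this claim is that verification, and you have not shown the branches actually close.

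Two further points. First, your remark that one must show $4\notin\range(g)$ \emph{globally} conflates the two exit routes: for the ``$g$ preserves $\{1,2,3\}$'' route (part~\ref{2.2} of Lemma~\ref{lemma 2.1}) you only need the six triples on $\{1,2,3\}$, whereas the range argument (part~\ref{2.3}) does need all triples, including those like $(2,3,4)$ where $f_z$ can easily output $4$; keeping these separate matters for which composition you choose. Second, your claim that ``analogous computations apply'' in cases (3) and (4) is not right as stated: there the set $\{u,v,w\}$ is $\{1,3,4\}$, so Theorem~\ref{thm 2.4} does not determine $f(1,2,4)$, and $f_z(1,2,3)=f(1,2,4)$ is unknown. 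The paper avoids redoing the analysis by observing that swapping the second and third variables preserves identity \eqref{*} and maps cases (3) and (4) onto cases (1) and (2) after renaming elements; you would need either that reduction or a genuinely different set of compositions (e.g.\ based on $f_y$) for these two cases.
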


\begin{proof}
~

\begin{enumerate}
\item Lemma \ref{lemma 3.1} shows that $f_{zy}$\ preserves $\{1,2,3\}$ (and
hence $f$ is not minimal) except when $f(3,2,1)\nolinebreak =\nolinebreak
4,\ \allowbreak f(3,2,4)\nolinebreak =\nolinebreak 4$ and $%
f(3,4,1)\nolinebreak =\nolinebreak 4$ or $f(3,4,1)\nolinebreak =\nolinebreak
2,\allowbreak f(3,4,2)\nolinebreak =\nolinebreak 4$. Let us examine the set $%
f(\langle 213\rangle )$. It has one or three elements by Lemma \ref{lemma
2.3}. If it is $\{1\}$, $\{2\}$ or $\{3\}$, then Lemma \ref{lemma 3.2} shows
that $f$ cannot be minimal. If we have {$f\mid _{\langle 213\rangle }$}$%
\equiv 4$, then we can compute that $f_{zy}\in \left[ 1,2,1;2,u,4\right] $,
where $u\neq 4$. Depending on whether $u=2,3,1$ resp. it can be shown that $%
\widehat{f}_{zy}$\ or $f_{zy}(y,z,f_{zy}(x,y,z))$ preserves $\{1,2,3\}$ or $%
\widehat{f}_{zy}$\ is not minimal by Lemma \ref{lemma 3.2}. Now let us
suppose that $f(\langle 213\rangle )$ is a three-element set. If it is $%
\{1,3,4\}$, then Theorem \ref{thm 2.4} implies {$f\mid _{\langle 341\rangle
} $}$=p$, hence $f(3,4,1)\nolinebreak =\nolinebreak 3$, but we have seen
that it is $4$ or $2$. Similarly $f(\langle 213\rangle )=\{1,2,3\},\{2,3,4\}$
is also impossible. So $f(\langle 213\rangle )$ can be nothing else but $%
\{1,2,4\}$. Since $f(3,2,1)=4$ there are only two possibilities: $f\in \left[
4,2,1;1,2,4\right] $ or $f\in \left[ 4,2,1;2,1,4\right] $, and then $%
f_{zy}\in \left[ 1,2,1;1,2,4\right] $ or $f_{zy}\in \left[ 1,2,1;2,1,4\right]
$. In both cases Lemma \ref{lemma 3.2} yields that $\widehat{f}_{zy}$\ is
not minimal, hence neither is $f$, and we have finished the proof.

\item Here we can use the same argument, the only difference is that in this
case $f_{zy}\in \left[ 1,1,2;\ast ,\ast ,\ast \right] $.

\item The function $f(x,z,y)$ is isomorphic to a function which is not
minimal by case \ref{claim 3.3 case 1}. (We shall note here that changing
the second and third variable does not influence the identity (\ref{*}).)

\item Now $f(x,z,y)$ falls under case \ref{claim 3.3 case 2} after renaming
the elements of the base set.\qedhere
\end{enumerate}
\end{proof}

\begin{claim}
\label{claim 3.4}If $f\in \left[ 4,3,2;\ast ,\ast ,\ast \right] $ then $f$
is not minimal.
\end{claim}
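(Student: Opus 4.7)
Following the template of Claim \ref{claim 3.3}, the plan is to assume $f$ is minimal and exhibit a nontrivial superposition $g\in[f]$ that preserves $\{1,2,3\}$, contradicting $f(1,2,3)=4$ via Lemma \ref{lemma 2.1}. The first step is to invoke Theorem \ref{thm 2.4} on $(a,b,c)=(1,2,3)$: since $(u,v,w)=(4,3,2)$ is pairwise distinct, we obtain $f\mid_{\langle 432\rangle}=p$ and $f\mid_{\langle 342\rangle}=p$, giving
\[ f(4,3,2)=4,\ f(3,2,4)=3,\ f(2,4,3)=2,\ f(3,4,2)=3,\ f(4,2,3)=4,\ f(2,3,4)=2. \]

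The candidate superposition is $f_{zy}$. By Lemma \ref{lemma 3.1}, $f_{zy}(2,3,1)=3$ and $f_{zy}(3,1,2)=2$ immediately, so the only inputs in $\{1,2,3\}^3$ on which $f_{zy}$ could escape $\{1,2,3\}$ are $(1,2,3)$ and the three transposition triples in $\langle 213\rangle$. I would next case-split on $f(\langle 213\rangle)$ using Lemma \ref{lemma 2.3}. If $f\mid_{\langle 213\rangle}\equiv v$ with $v\in\{1,2,3\}$, or if $f(\langle 213\rangle)=\{1,2,3\}$, then Lemma \ref{lemma 3.2} applied with $(a,b,c)=(2,1,3)$ forces $f\mid_{\langle 123\rangle}$ to be either constant or equal to $p$, both contradicting $f\mid_{\langle 123\rangle}=(4,3,2)$. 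In each remaining configuration---namely $v=4$ or $f(\langle 213\rangle)\in\{\{1,2,4\},\{1,3,4\},\{2,3,4\}\}$---a second application of Theorem \ref{thm 2.4} to the new pairwise-distinct cyclic triple on $\langle 213\rangle$ pins down six further values of $f$, in particular determining whichever of $f(1,2,4)$, $f(1,4,3)$, $f(1,4,2)$ is needed; combined with the six forced values above, these route the Lemma \ref{lemma 3.1} cascade for $f_{zy}$ back into $\{1,2,3\}$ on every relevant input. Consequently $f_{zy}$ preserves $\{1,2,3\}$ while $f$ does not, so $f$ is not minimal by Lemma \ref{lemma 2.1}.

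The main obstacle is the bookkeeping across the subcases: each of $\{1,2,4\}$, $\{1,3,4\}$, $\{2,3,4\}$ admits two cyclic orientations, so several partially-specified configurations must be verified. A useful reduction is the observation that $f'(x,y,z)=f(x,z,y)$ lies in $[f]$, is minimal iff $f$ is, and also satisfies $(\ast)$ (because the substitution $y\leftrightarrow z$ leaves the identity invariant); this interchanges $\langle 123\rangle$ and $\langle 213\rangle$ and halves the casework. In any stubborn configuration where $f_{zy}$ itself does not preserve $\{1,2,3\}$, one should pass to $\widehat{f}_{zy}$ or to a further superposition such as $f_{zy}(y,z,f_{zy}(x,y,z))$, exactly as in the proof of Claim \ref{claim 3.3}.
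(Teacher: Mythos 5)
Your overall strategy matches the paper's (exhibit a superposition in $[f]$ preserving $\{1,2,3\}$), and several of your cases are sound: the appeal to Lemma \ref{lemma 3.2} when $f(\langle 213\rangle )\subseteq \{1,2,3\}$ is exactly right, and when $f(\langle 213\rangle )$ is $\{1,2,4\}$ or $\{1,3,4\}$ the combination of Theorem \ref{thm 2.4} with the Lemma \ref{lemma 3.1} cascade does show that $f_{zy}$ preserves $\{1,2,3\}$ (either the cascade stops at a value $\neq 4$, which is automatically in $\{1,2,3\}$, or it reaches a triple pinned down by one of the two applications of Theorem \ref{thm 2.4}). But there is a genuine gap in the two hardest cases. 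If $f\mid _{\langle 213\rangle }\equiv 4$ there is no ``new pairwise-distinct cyclic triple,'' so the second application of Theorem \ref{thm 2.4} you invoke is unavailable; the values $f(1,2,4)$, $f(1,4,3)$, $f(1,4,2)$ remain completely free and the cascade for $f_{zy}(1,2,3)$ can terminate at $4$. And if $f(\langle 213\rangle )=\{2,3,4\}$, the second application of Theorem \ref{thm 2.4} only re-determines $f$ on triples from $\{2,3,4\}$ --- the same six values the first application already gave --- so again none of $f(1,2,4)$, $f(1,4,3)$, $f(1,4,2)$, $f(1,3,4)$ is controlled, and $f_{zy}(1,2,3)$, $f_{zy}(1,3,2)$ may equal $4$.

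The fallback ``pass to $\widehat{f}_{zy}$ or a further superposition'' does not close this. For instance, with $f\in \left[ 4,3,2;3,4,2\right] $ nothing you have derived prevents $f(1,2,4)=f(1,4,3)=f(1,3,4)=f(1,4,2)=4$, and then Lemma \ref{lemma 3.1} puts $f_{zy}$ back into $\left[ 4,3,2;3,4,2\right] $, so iterating this construction need not escape the bad pattern. The paper switches to a different composite in precisely these cases, namely $g(x,y,z)=f(z,y,f(x,y,z))$: feeding the inner value into a slot where it frequently coincides with an outer argument forces the majority law to collapse most entries, and the remaining ones fall on $\{2,3,4\}$-triples already fixed by Theorem \ref{thm 2.4}. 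Concretely, in the constant-$4$ case one gets $g\in \left[ 3,3,2;u,2,v\right] $ and finishes with a short sub-case analysis on $u,v$ (using $h(x,y,z)=g(g(x,y,z),z,x)$, $\widehat{g}$ and Lemma \ref{lemma 3.2}), while in the $\{2,3,4\}$ case $g$ or $\widehat{g}$ preserves $\{1,2,3\}$ outright. Some such explicitly chosen superposition, verified entry by entry, is what your argument is missing.
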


\begin{proof}
Just as in the previous claim, we examine $f(\langle 213\rangle )$. If it is 
$\{1\}$, $\{2\}$, $\{3\}$ or $\{1,2,3\}$ then Lemma \ref{lemma 3.2} shows
that $f$ is not minimal. If {$f\mid _{\langle 213\rangle }$}$\equiv 4$ then $%
g(x,y,z)=f(z,y,f(x,y,z))\in \left[ 3,3,2;u,2,v\right] $. If none of $u$ and $%
v$ equals $4$, then $g$ preserves $\{1,2,3\}$. If $u\neq 3$ then $\widehat{h}
$ does, for $h(x,y,z)=g(g(x,y,z),z,x)$. Only $u\nolinebreak =\nolinebreak
3,\allowbreak v\nolinebreak =\nolinebreak 4$ remains, but in this case $%
\widehat{g}\in \left[ 3,3,3;2,4,3\right] $, and it is not a minimal function
by Lemma \ref{lemma 3.2}. Now let us suppose that $f(\langle 213\rangle )$
is a three-element set containing $4$. If it is $\{1,2,4\}$, then according
to Claim \ref{claim 3.3}, we must have $f\in \left[ 4,3,2;2,1,4\right] $ or $%
f\in \left[ 4,3,2;1,2,4\right] $, and in both cases $f_{zy}$\ preserves $%
\{1,2,3\}$. Similarly $f(\langle 213\rangle )\nolinebreak =\nolinebreak
\{1,3,4\}$ implies $f\in \left[ 4,3,2;4,1,3\right] $ or $f\in \left[
4,3,2;4,3,1\right] $, and again $f_{zy}$\ preserves $\{1,2,3\}$. Finally, if 
$f(\langle 213\rangle )=\{2,3,4\}$ then $f\in \left[ 4,3,2;3,4,2\right] $ or 
$f\in \left[ 4,3,2;2,4,3\right] $. In the first case $%
g(x,y,z)=f(z,y,f(x,y,z))$ preserves $\{1,2,3\}$, in the second case $%
\widehat{g}$ does.
\end{proof}

\begin{claim}
\label{claim 3.5}If $f\in\left[ 4,2,3;2,1,4\right] $ or $f\in\left[
4,2,3;4,1,3\right] $ then $f$ is not minimal.
\end{claim}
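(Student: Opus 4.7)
The plan is to assume, for contradiction, that $f$ is minimal and to exhibit a nontrivial superposition of $f$ that preserves the set $\{1,2,3\}$. Since $f(1,2,3)=4$, $f$ itself does not preserve $\{1,2,3\}$, so \ref{2.2} will then contradict minimality. The composition $f_{zy}$ will do the job in both cases.

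First I would apply Theorem \ref{thm 2.4} twice to pin down many of the values of $f$ on triples involving the element $4$. The triple $(1,2,3)$ has cyclic image $(4,2,3)$ in both cases, so {$f\mid _{\langle 423\rangle }$}$=p$ and {$f\mid _{\langle 243\rangle }$}$=p$, which determines $f$ on every ordering of $\{2,3,4\}$. In Case 1 the image of $(2,1,3)$ is $(2,1,4)$, forcing $p$ on $\langle 214\rangle$ and on $\langle 124\rangle$ and hence fixing $f$ on $\{1,2,4\}$; in Case 2 the image is $(4,1,3)$, giving $p$ on $\langle 413\rangle$ and $\langle 143\rangle$ and hence fixing $f$ on $\{1,3,4\}$. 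Next, for each of the six orderings $(a,b,c)$ of $\{1,2,3\}$ I would compute $f_{zy}(a,b,c)$ via Lemma \ref{lemma 3.1}. On the four orderings with $f(a,b,c)\neq 4$ the first clause of the lemma gives $f_{zy}(a,b,c)=f(a,b,c)\in\{1,2,3\}$. The only orderings requiring work are the two with $f(a,b,c)=4$: these are $(1,2,3),(3,2,1)$ in Case 1 and $(1,2,3),(2,1,3)$ in Case 2. In Case 1 the second clause of Lemma \ref{lemma 3.1} returns the pinned-down values $f(1,2,4)=1$ and $f(3,2,4)=3$. In Case 2 the values $f(1,2,4)$ and $f(2,1,4)$ remain free, but if either equals $4$ the recursion passes to the third clause at $f(1,4,3)=1$ or $f(2,4,3)=2$; both are pinned down and differ from the corresponding forbidden value $b$, so once again the value returned lies in $\{1,2,3\}$.

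Thus $f_{zy}$ is a majority function (hence nontrivial) that preserves $\{1,2,3\}$, while $f$ does not; by \ref{2.2} the clone $[f]$ is not minimal, contradicting our assumption. The main thing to watch---and the only real obstacle---is verifying that the recursion of Lemma \ref{lemma 3.1} halts inside $\{1,2,3\}$ for the two ``bad'' orderings in each case without appealing to the still-free values of $f$ on the triples from $\{1,3,4\}$ (in Case 1) or $\{1,2,4\}$ (in Case 2); the key observation is that the two applications of Theorem \ref{thm 2.4}---to $(1,2,3)$ and to $(2,1,3)$---supply precisely the intermediate values the recursion calls on, so no further case analysis on the free values is needed.
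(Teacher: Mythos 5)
Your argument is correct and follows the paper's strategy exactly: pin down the values of $f$ on the triples containing $4$ via Lemma \ref{lemma 2.3} and Theorem \ref{thm 2.4}, then exhibit a nontrivial superposition of $f$ that preserves $\{1,2,3\}$ and invoke \ref{2.2}. The paper's own proof is a one-liner using the lighter compositions $f_{z}$ (first case) and $f_{y}$ (second case), but your uniform use of $f_{zy}$ together with Lemma \ref{lemma 3.1} works just as well.
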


\begin{proof}
In the first case $f_{z}$ preserves $\{1,2,3\}$, in the second case $f_{y}$
does.
\end{proof}

\begin{claim}
\label{claim 3.6}If $f\in\left[ 4,2,3;2,4,3\right] $ then $f=M_{2}$.
\end{claim}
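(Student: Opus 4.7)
The plan is to use the already-known values of $f$ on $\{1,2,3\}$-triples, together with the identity \eqref{*} and the minimality of $f$, to force $f$ to agree with $M_{2}$ on every remaining triple. I split the argument into two phases. For the first phase, I would apply Theorem \ref{thm 2.4} to the triple $(1,2,3)$: since the three values $u=f(1,2,3)=4$, $v=f(2,3,1)=2$, $w=f(3,1,2)=3$ are pairwise distinct, Theorem \ref{thm 2.4} yields $f\mid_{\langle 423\rangle}=p$ and $f\mid_{\langle 243\rangle}=p$. These six values are exactly those of $M_{2}$ on the six triples with pairwise distinct entries from $\{2,3,4\}$.

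For the second phase I must show that $f(a,b,c)=4$ whenever $\{a,b,c\}$ equals $\{1,2,4\}$ or $\{1,3,4\}$. I would treat the orbit $\langle 124\rangle$ as a template, the orbits $\langle 214\rangle$, $\langle 134\rangle$, $\langle 314\rangle$ being handled by the same recipe. Write $u=f(1,2,4)$, $v=f(2,4,1)$, $w=f(4,1,2)$. By Lemma \ref{lemma 2.3}, $\left|\{u,v,w\}\right|\in\{1,3\}$, and I aim to rule out every possibility except $u=v=w=4$. In the three-element case, Theorem \ref{thm 2.4} forces $f\mid_{\langle uvw\rangle}=p$ and $f\mid_{\langle vuw\rangle}=p$: the image set $\{1,2,3\}$ is immediately rejected because $f\mid_{\{1,2,3\}}$ is not a $p$-permutation (we have $f(1,2,3)=4\neq 1$ and $f(1,3,2)=4\neq 1$); the image sets $\{1,2,4\}$ or $\{1,3,4\}$ would make $f$ preserve the corresponding subset as a $p$-permutation, whereupon an explicit superposition such as $f_{zy}$, evaluated on the six $\{1,2,3\}$-triples through Lemma \ref{lemma 3.1} and the first-phase values, turns out to preserve $\{1,2,3\}$, contradicting the minimality of $f$ via Lemma \ref{lemma 2.1}(\ref{2.2}); the image set $\{2,3,4\}$ (which is formally consistent with Theorem \ref{thm 2.4} because $f\mid_{\{2,3,4\}}$ is already a $p$-permutation) is discarded by checking each of the six cyclic placements of $(u,v,w)$ and producing a concrete composition in $[f]$ whose evaluation (via Lemma \ref{lemma 3.1}) is a majority function preserving $\{1,2,3\}$. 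In the one-element case $f\mid_{\langle 124\rangle}\equiv u$, Lemma \ref{lemma 3.2} forces the sibling orbit $\langle 214\rangle$ to be constant too, and the three possibilities $u\in\{1,2,3\}$ are each discarded by a parallel computation, producing a superposition of $f$ that preserves $\{1,2,3\}$.

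The main obstacle is the combinatorial bookkeeping: four orbits, each with several sub-cases on the shape of $\{u,v,w\}$, and for each sub-case I must identify the correct composition whose Lemma \ref{lemma 3.1} evaluation on the $\{1,2,3\}$-triples yields the contradictory preservation. The technique is precisely the one already exercised in Claims \ref{claim 3.3}--\ref{claim 3.5}, so each sub-case reduces to a direct computation of the same flavour. Once all four orbits are settled we conclude $f\equiv 4$ on every triple with pairwise distinct entries from $\{1,2,4\}$ or $\{1,3,4\}$, which together with the first phase gives $f=M_{2}$.
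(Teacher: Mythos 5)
Your first phase is fine and coincides with the paper's: the six values of $f$ on the triples with support $\{2,3,4\}$ are forced to agree with $M_{2}$ (the paper gets both orbits from Lemma \ref{lemma 2.3} applied to $(1,2,3)$ and to $(2,1,3)$ separately; your appeal to Theorem \ref{thm 2.4} also works, since $f$ is minimal). The second phase, however, is a plan rather than a proof. Every bad case is to be ``discarded by producing a concrete composition in $[f]$ whose evaluation preserves $\{1,2,3\}$'', but not a single such composition is exhibited or checked, and for a statement of this kind those verifications \emph{are} the content of the proof: the existence of a suitable superposition in each sub-case is exactly what has to be demonstrated. Moreover, one of the few concrete deductions you do make is misapplied: in the one-element case for the orbit $\langle 124\rangle$ you invoke Lemma \ref{lemma 3.2} to force the sibling orbit $\langle 214\rangle$ to be constant as well, but Lemma \ref{lemma 3.2} requires $f(\langle abc\rangle)\subseteq\{a,b,c\}$, which fails precisely when the constant value is $3$, the element outside $\{1,2,4\}$; that sub-case needs a separate argument. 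Finally, the ``four independent orbits'' bookkeeping is less self-contained than you suggest: evaluating $f_{zy}$ and similar compositions on the $\{1,2,3\}$-triples via Lemma \ref{lemma 3.1} drags in values from \emph{other} orbits (for instance $f(1,3,4)$ enters the analysis of $\langle 124\rangle$ through $f_{zy}(1,3,2)$), so the cases couple and the case count grows well beyond what you describe.

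For comparison, the paper's second phase avoids the case explosion entirely. It takes $g(x,y,z)=f(x,y,f(x,y,z))$ and computes, using only the known values on $\{1,2,3\}$ and the majority law, that $g\in\left[f(1,2,4),2,3;2,f(1,3,4),3\right]$. A three-way split on the pair $\bigl(f(1,2,4),f(1,3,4)\bigr)$ then suffices: if neither equals $4$ then $g$ is a majority function preserving $\{1,2,3\}$ while $f$ does not, contradicting minimality via \ref{2.2}; if exactly one equals $4$ then $\widehat{g}$ violates Lemma \ref{lemma 3.2}; hence both equal $4$. The remaining ten values on triples containing both $1$ and $4$ follow from the obvious variants $f(y,x,f(x,y,z))$, $f(x,f(x,y,z),y)$, and so on, each settling two unknowns at a time. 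If you want to salvage your outline, you should adopt this device in place of the per-orbit analysis of image sets.
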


\begin{proof}
Lemma \ref{lemma 2.3} yields {$f\mid_{\langle234\rangle}$}$=p$ \ and {$%
f\mid_{\langle324\rangle}$}$=p$. Let us put $g(x,y,z)=f(x,y,f(x,y,z))$. Then 
$g\in\left[ f(1,2,4),2,3;2,f(1,3,4),3\right] $. If none of $f(1,2,4)$, $%
f(1,3,4)$ equals $4$ then $g$ preserves $\{1,2,3\}$. If one of them equals $%
4 $, the other not, then $\widehat{g}$ is not minimal by Lemma \ref{lemma
3.2}. So we must have $f(1,2,4)=f(1,3,4)=4$. In the same way we get $%
f(2,1,4)=f(3,1,4)=4$, $f(1,4,2)=f(1,4,3)=4$, etc. by using $%
g(x,y,z)=f(y,x,f(x,y,z))$, $f(x,f(x,y,z),y)$ etc.
\end{proof}

\begin{claim}
\label{claim 3.7}If $f\in\left[ 4,2,3;4,4,4\right] $ then $f$ is not minimal.
\end{claim}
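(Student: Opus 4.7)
My plan is to argue by contradiction: assume $f$ is minimal and produce a proper nontrivial subclone of $[f]$. First, applying Theorem \ref{thm 2.4} to the triple $(1,2,3)$, whose images $u=4$, $v=2$, $w=3$ are pairwise distinct, forces $f\mid _{\langle 423\rangle }=p$ and $f\mid _{\langle 243\rangle }=p$, pinning down the six values of $f$ on orderings of $\{2,3,4\}$: namely $f(4,2,3)=4$, $f(2,3,4)=2$, $f(3,4,2)=3$, $f(2,4,3)=2$, $f(4,3,2)=4$, $f(3,2,4)=3$.

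Next I introduce the superposition $g(x,y,z)=f(x,f(x,y,z),f(x,z,y))\in[f]$, chosen so that on the orderings of $\{1,2,3\}$ it matches the pattern $[4,2,3;2,4,3]$ of $M_{2}$. A direct check from the tabulated values of $f$ gives, for example, $g(1,2,3)=f(1,4,4)=4$ and $g(2,1,3)=f(2,4,2)=2$. Using the $\{2,3,4\}$-values listed above, one likewise verifies that $g$ realizes the same pattern $[4,2,3;2,4,3]$ on the orderings of $\{2,3,4\}$ (e.g.\ $g(4,2,3)=f(4,4,4)=4$ and $g(2,4,3)=f(2,2,2)=2$). Consequently the recursion $g^{(k+1)}(x,y,z)=g(g^{(k)}(x,y,z),g^{(k)}(y,z,x),g^{(k)}(z,x,y))$ from Theorem \ref{thm 2.2}, when evaluated at a $\{1,2,3\}$-triple, reduces to $g$ applied at a $\{2,3,4\}$-triple, and induction on $k$ therefore yields $g^{(k)}\in[4,2,3;2,4,3]$ on the $\{1,2,3\}$-orderings; in particular $\widehat{g}\in[4,2,3;2,4,3]$.

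Since $\widehat{g}\in[f]$ is nontrivial and satisfies $(\ast)$ by Theorem \ref{thm 2.2}, the assumed minimality of $f$ passes to $\widehat{g}$, and Claim \ref{claim 3.6} forces $\widehat{g}=M_{2}$. Hence $M_{2}\in[f]$, so $[f]=[M_{2}]$ by minimality. The contradiction now comes from the equivalence relation $\rho$ on $A$ with blocks $\{1,4\}$, $\{2\}$, $\{3\}$: a glance at the table of $M_{2}$ shows that $M_{2}$ preserves $\rho$ (swapping $1$ and $4$ in any coordinate of a triple leaves the output in the same block), so by (\ref{1.1}) every member of $[M_{2}]$ preserves $\rho$. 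But $(2,1,3)$ and $(2,4,3)$ are coordinatewise $\rho$-related while $f(2,1,3)=4$ and $f(2,4,3)=2$ fall in different blocks, so $f$ fails to preserve $\rho$. This contradicts $f\in[M_{2}]$.

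The main obstacle is identifying the right superposition $g$: once one sees that $g$ already exhibits the $M_{2}$-pattern on both the $\{1,2,3\}$- and the $\{2,3,4\}$-orderings, the $(\ast)$-iterates stabilize automatically and Claim \ref{claim 3.6} delivers $M_{2}\in[f]$; after that, the equivalence $\rho$ identifying $1$ with $4$ is the natural invariant separating $[M_{2}]$ from $f$.
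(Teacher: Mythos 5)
Your proof is correct, and it takes a genuinely different route from the paper's. The paper computes $f_{zy}$, splits on the value $f_{zy}(2,1,3)$, disposes of most cases via superpositions that preserve $\{1,2,3\}$ or fail Lemma \ref{lemma 3.2}, and is left with the single residual case $f_{zy}\in\left[4,2,3;2,4,3\right]$, which it settles by Claim \ref{claim 3.6} together with a forward reference to the (later) enumeration of the three majority functions in $[M_{2}]$. You instead first invoke Theorem \ref{thm 2.4} to pin down all six values of $f$ on the orderings of $\{2,3,4\}$, and then design one superposition $g(x,y,z)=f(x,f(x,y,z),f(x,z,y))$ whose values on the orderings of $\{1,2,3\}$ and of $\{2,3,4\}$ both follow the $M_{2}$-pattern; I checked these evaluations and the induction showing $g^{(k)}\in\left[4,2,3;2,4,3\right]$ for all $k$, so $\widehat{g}\in\left[4,2,3;2,4,3\right]$, and since minimality of $f$ makes $\widehat{g}$ a minimal function satisfying \eqref{*}, Claim \ref{claim 3.6} gives $\widehat{g}=M_{2}$ and hence $[f]=[M_{2}]$. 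Your closing contradiction via the equivalence relation with blocks $\{1,4\},\{2\},\{3\}$ (preserved by $M_{2}$, violated by $f$ at $(2,1,3)$ versus $(2,4,3)$) is a nice replacement for the paper's forward reference: it makes the argument self-contained at this point in the text, at the cost of having to verify one invariant of $M_{2}$ directly from its table. What the paper's version buys in exchange is that it avoids using Theorem \ref{thm 2.4} (i.e., the full strength of the minimality hypothesis on the $\{2,3,4\}$-orderings) and folds the claim into the same $f_{zy}$-machinery used throughout Section 3.1; what yours buys is a single case-free superposition and an explicit separating relation.
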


\begin{proof}
If $f_{zy}(2,1,3)=1$ then for $h(x,y,z)=f_{zy}(z,x,f_{zy}(x,y,z))$ either $%
\widehat{h}$ preserves $\{1,2,3\}$ or fails to be minimal by Lemma \ref%
{lemma 3.2}. If $f_{zy}(2,1,3)\neq1$ then the same holds for $f_{zy}$\
itself, except when $f_{zy}\in\left[ 4,2,3;2,4,3\right] $. In this case
Claim \ref{claim 3.6} yields $f_{zy}=M_{2}$. We will see later that the
clone generated by $M_{2}$ contains only three majority functions, and none
of them equals $f$.
\end{proof}

\begin{theorem}
\label{thm 3.8}Any minimal nonconservative majority function on $A$ which
satisfies \eqref{*} is isomorphic to $M_{2}$ or it belongs to the set $S$.
\end{theorem}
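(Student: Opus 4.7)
The plan is to assume that $f$ is minimal, nonconservative, satisfies $(\ast)$, and is not in $S$, and to deduce $f \cong M_{2}$. Since $f \notin S$, we can pick pairwise distinct $a,b,c \in A$ for which $f\mid_{\langle abc\rangle}$ is neither $\equiv u$ nor equal to $p$. By Lemma \ref{lemma 2.3} the set $f(\langle abc\rangle)$ has size $1$ or $3$; it has three elements by the choice of $(a,b,c)$, and by Lemma \ref{lemma 3.2} it cannot be contained in $\{a,b,c\}$, so the fourth element of $A$ belongs to it. After relabeling $A$ so that this fourth element is $4$, and cyclically shifting $a,b,c$ so that $f(1,2,3) = 4$, we have $f \in [4,\ast,\ast;\ast,\ast,\ast]$. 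The pair $(f(2,3,1), f(3,1,2))$ consists of two distinct elements of $\{1,2,3\}$; Claim \ref{claim 3.3} excludes the four ordered pairs $(2,1)$, $(1,2)$, $(1,3)$, $(3,1)$, and Claim \ref{claim 3.4} excludes $(3,2)$, so $f \in [4,2,3;\ast,\ast,\ast]$.

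We next examine $(s,t,u) := (f(2,1,3), f(1,3,2), f(3,2,1))$. By Lemma \ref{lemma 2.3} this triple is either constant or has three distinct entries. If $(s,t,u)$ is a constant in $\{1,2,3\}$, or if $\{s,t,u\}$ is a three-element subset of $\{1,2,3\}$, then $f(\langle 213\rangle) \subseteq \{1,2,3\}$ and Lemma \ref{lemma 3.2} applied with $(a,b,c) = (2,1,3)$ would force $f\mid_{\langle 123\rangle}$ to be $p$ or constant, contradicting $f\mid_{\langle 123\rangle} = (4,2,3)$. The constant case $(4,4,4)$ is killed by Claim \ref{claim 3.7}. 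What remains is that $\{s,t,u\}$ is one of $\{1,2,4\}$, $\{1,3,4\}$, or $\{2,3,4\}$, giving eighteen ordered triples; Claims \ref{claim 3.5} and \ref{claim 3.6} handle three of these directly, ruling out $(2,1,4)$ and $(4,1,3)$ and identifying $(2,4,3)$ with $M_{2}$.

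For each of the remaining fifteen orderings I would argue in the style of Claims \ref{claim 3.5}--\ref{claim 3.7}. Theorem \ref{thm 2.4} applied to $\langle 213\rangle$ gives $f\mid_{\langle stu\rangle} = p$ and $f\mid_{\langle tsu\rangle} = p$, which pins down $f$ on all six triples of $\{s,t,u\}$. One would then exhibit a concrete superposition --- typically $f_{z}$, $f_{y}$, $f_{zy}$, or a $\widehat{g}$-style idempotent regularization of a short composite such as $g(x,y,z) = f(x,y,f(x,y,z))$ --- whose values on every triple from $\{1,2,3\}$ lie in $\{1,2,3\}$; since $f$ itself does not preserve $\{1,2,3\}$, item \ref{2.2} of Lemma \ref{lemma 2.1} then delivers the required non-minimality. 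I expect the main obstacle to be precisely this final enumeration: the form $[4,2,3;\ast,\ast,\ast]$ has essentially no nontrivial symmetry of $A$ preserving it, so the fifteen residual orderings cannot be collapsed to a small number of representatives and must be checked by hand, each time selecting the right superposition and evaluating it via Lemma \ref{lemma 3.1}. The unique ordering that survives all such attempts is $(2,4,3)$, and by Claim \ref{claim 3.6} it forces $f \cong M_{2}$.
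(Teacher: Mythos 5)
Your reduction to $f\in\left[4,2,3;\ast,\ast,\ast\right]$ is correct and coincides with the paper's argument: Lemma \ref{lemma 2.3} and Lemma \ref{lemma 3.2} force $4\in f(\langle 123\rangle)$ after relabelling, and Claims \ref{claim 3.3} and \ref{claim 3.4} eliminate five of the six orderings of the remaining pair. The problem is the second orbit. You correctly narrow $f(\langle 213\rangle)$ down to either the constant $4$ (killed by Claim \ref{claim 3.7}) or a three-element set containing $4$, arriving at eighteen ordered triples, but you then leave fifteen of them as cases ``to be argued in the style of Claims \ref{claim 3.5}--\ref{claim 3.7}'', i.e.\ as an unexecuted search for suitable superpositions. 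That is a genuine gap: no compositions are exhibited for those fifteen cases, and the assertion that $(2,4,3)$ is ``the unique ordering that survives all such attempts'' is exactly the statement that needed proof.

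Moreover, the missing step is not a long hand computation but a short observation you explicitly reject. Claims \ref{claim 3.3} and \ref{claim 3.4}, read up to isomorphism of the base set, say that for \emph{every} orbit $\langle abc\rangle$ on which a minimal $f$ is neither constant nor $p$, exactly one of the three values is $d$ and the other two equal the first variable. Applying this to the orbit $\langle 213\rangle$ (the collapse comes from re-using these claims on the second orbit, not from a symmetry of $A$ fixing the pattern $\left[4,2,3;\ast,\ast,\ast\right]$, which indeed does not exist) immediately cuts the eighteen orderings down to the three in which the two non-$4$ values are first variables: $[4,2,3;2,1,4]$, $[4,2,3;4,1,3]$ and $[4,2,3;2,4,3]$. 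These are precisely the cases disposed of by Claims \ref{claim 3.5} and \ref{claim 3.6}. Your remark that ``the fifteen residual orderings cannot be collapsed to a small number of representatives and must be checked by hand'' shows the gap is conceptual rather than clerical; completing the proof along your own lines would amount to re-proving Claims \ref{claim 3.3} and \ref{claim 3.4} for the reversed orbit, which their isomorphism-invariant reading already gives you for free.
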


\begin{proof}
Let $f$ be a function as stated in the theorem. According to Claim \ref%
{claim 3.3} and Claim \ref{claim 3.4}, for every $a,b,c$ if neither {$f\mid
_{\langle abc\rangle }$}$=p$ \ nor {$f\mid _{\langle abc\rangle }$}$\equiv u$
holds, then we must have that on two of the three triplets of $\langle
abc\rangle $ the value of $f$ equals the first variable, while on the third
one $f$ equals $d$. If $f\notin S$ then this case really appears, so we can
suppose (after an isomorphism if necessary) that $f(1,2,3)\nolinebreak
=\nolinebreak 4,\allowbreak f(2,3,1)\nolinebreak =\nolinebreak 2,\allowbreak
f(3,1,2)\nolinebreak =\nolinebreak 3$. Now if $4\notin f(\langle 213\rangle
) $ then we get a contradiction by Lemma \ref{lemma 3.2}. If {$f\mid
_{\langle 213\rangle }$}$\equiv 4$ than Claim \ref{claim 3.7} implies that $%
f $ is not minimal. So $f(\langle 213\rangle )$ must be a three-element set
containing $4$, and then again by Claims \ref{claim 3.3} and \ref{claim 3.4}
we must have $f\in \left[ 4,2,3;2,1,4\right] $, $f\in \left[ 4,2,3;4,1,3%
\right] $, or $f\in \left[ 4,2,3;2,4,3\right] $. The first two of these is
impossible by Claim \ref{claim 3.5}, and in the third case Claim \ref{claim
3.6} shows that $f$ equals $M_{2}$.
\end{proof}

\subsection*{3.2}

$~\smallskip $

In this section we are going to search for the minimal functions of the set $%
S$. The conservative ones are already described, so we deal only with
nonconservative functions. We assume $f$ to be such a function and we will
prove several properties of $f$, until we find that only a few functions
(essentially two) possess these properties, and these happen to be minimal.

\begin{definition}
\label{def 3.9}A ternary function $g$ is said to be cyclically commutative
if it is invariant under the cyclic permutation of variables, i.e. 
\begin{equation*}
g(x,y,z)\approx g(y,z,x)\approx g(z,x,y).
\end{equation*}
\end{definition}

\begin{claim}
\label{claim 3.10}The function $f$ is cyclically commutative.
\end{claim}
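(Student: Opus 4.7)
The plan is a proof by contradiction. Since $f\in S$, on every cycle $\langle abc\rangle$ the function $f$ is either $\equiv u$ (cyclically commutative on that cycle) or the $p$-pattern (not). If cyclic commutativity fails, $f\mid_{\langle abc\rangle}=p$ for some distinct triple; after an isomorphism of $A$ take $(a,b,c)=(1,2,3)$, and Theorem~\ref{thm 2.4} then forces $f\mid_{\langle 213\rangle}=p$ as well. Hence $f$ preserves $\{1,2,3\}$ with $f\mid_{\{1,2,3\}}=m_{2}$.

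Next I exploit nonconservativity. A majority function automatically preserves every one- and two-element subset, and we have just seen $f$ also preserves $\{1,2,3\}$, so $f$ must fail to preserve one of $\{1,2,4\},\{1,3,4\},\{2,3,4\}$. The three-cycle $(1\,2\,3)$ is a symmetry of the configuration fixing $m_{2}$ and cyclically permuting those three sets, and within $\{1,2,4\}$ the transposition $(1\,2)$ further fixes $m_{2}$ and swaps the two cycles; so without loss of generality $f\equiv 3$ on $\langle 124\rangle$, that is $f(1,2,4)=f(2,4,1)=f(4,1,2)=3$.

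The third and main step is a case analysis on the remaining cycles through $4$, namely $\langle 214\rangle,\langle 134\rangle,\langle 314\rangle,\langle 234\rangle,\langle 324\rangle$, each of which $S$-membership restricts to be either a $p$-cycle or constant at some element of $A$. The workhorse superposition is $f_{zy}$, whose values on any triple can be read off Lemma~\ref{lemma 3.1}: in particular $f_{zy}(1,2,4)=f(1,2,3)=1$ unconditionally, and in most configurations Lemma~\ref{lemma 3.1} similarly forces $f_{zy}(2,4,1),f_{zy}(4,1,2)\in\{1,2,4\}$, so $f_{zy}$ preserves $\{1,2,4\}$ while $f$ does not, contradicting minimality by Lemma~\ref{lemma 2.1}\ref{2.2}. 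The residual branches are those in which the cascade of Lemma~\ref{lemma 3.1} is forced all the way down, essentially when $f$ is constant at $3$ on every cycle through $4$; in this most degenerate subcase a direct verification shows that on any distinct triple $f_{zy}$ takes its value in the $\theta'$-block of the majority of the input coordinates, where $\theta'=\{\{1,2\},\{3,4\}\}$, so $f_{zy}$ preserves $\theta'$. On the other hand $f$ does not preserve $\theta'$ --- the inputs $((1,1),(2,2),(3,4))$ are coordinate-wise $\theta'$-related but the output pair $(f(1,2,3),f(1,2,4))=(1,3)$ is not --- so invoking~\eqref{1.1} once more produces a nontrivial proper subclone of $[f]$, giving the final contradiction.

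The main obstacle is the bookkeeping in the case analysis: $S$-membership leaves several options per cycle and the single superposition $f_{zy}$ does not uniformly produce preservation of $\{1,2,4\}$; recognising the auxiliary equivalence relation $\theta'$ as the right substitute for $\{1,2,4\}$ in the degenerate subcase is the delicate part of the argument.
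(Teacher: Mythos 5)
Your normalization is sound and, after exchanging the roles of $3$ and $4$, it reproduces the paper's own starting position: a $p$-cycle forces $f\mid _{\{1,2,3\}}$ to be the first projection via Theorem~\ref{thm 2.4}, nonconservativity then yields a cycle through $4$ on which $f$ is constantly $3$, and the symmetries you invoke do legitimately bring this to $f\mid _{\langle 124\rangle }\equiv 3$. The gap is in your third step, in two places. First, the configurations in which $f_{zy}$ fails to preserve $\{1,2,4\}$ are not ``essentially'' the single case where $f$ is constant at $3$ on every cycle through $4$: reading Lemma~\ref{lemma 3.1} at the triple $(4,1,2)$ (where $d=3$), already $f\mid _{\langle 134\rangle }\equiv 3$ together with $f\mid _{\langle 324\rangle }\equiv 3$ forces $f_{zy}(4,1,2)=f(4,3,2)=3$ no matter what $f$ does on the remaining cycles, and the triple $(4,2,1)$ generates a further family of failures; none of these branches is treated. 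Second, and decisively, the fallback for the fully degenerate function is false. Take $f$ to be the first projection on distinct triples from $\{1,2,3\}$ and constant $3$ on all other distinct triples (this function does lie in $S$ and satisfies \eqref{*}). Since $f(3,1,2)=3\neq 4$, Lemma~\ref{lemma 3.1} gives $f_{zy}(3,1,2)=3$, which is not in the majority block $\{1,2\}$ of the inputs, so your stated ``direct verification'' already fails there; moreover $f_{zy}(3,2,2)=2$ while $(3,1,2)$ and $(3,2,2)$ are coordinatewise $\theta ^{\prime }$-related, so $f_{zy}$ does not preserve $\theta ^{\prime }$ and the final contradiction evaporates.

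That degenerate function is genuinely the hard case, and it cannot be killed by exhibiting a relation preserved by $f_{zy}$ but not by $f$ in this simple-minded way. The paper disposes of it (in its labelling, the subcase $u=4$ with $f_{zy}\in \left[ 1,2,4;\ast ,\ast ,\ast \right] $) by appealing to Theorem~\ref{thm 3.8} to conclude that $\widehat{f}_{zy}$ is isomorphic to $M_{2}$, and then using the fact that $[M_{2}]$ contains no function of $S$ other than the first projection, whence $f\notin \lbrack \widehat{f}_{zy}]$ and $f$ is not minimal. You need some substitute for that argument, and the unhandled intermediate branches require the additional superpositions (such as $\widehat{f}_{zy}$ and $f_{zy}(y,z,f_{zy}(x,y,z))$) and the appeal to Lemma~\ref{lemma 3.2} that the paper's case analysis supplies.
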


\begin{proof}
For contradiction, let us suppose {$f\mid _{\langle 124\rangle }$}$=p$, and
then by Lemma \ref{lemma 3.2} we have also {$f\mid _{\langle 214\rangle }$}$%
=p$. Since $f$ is not conservative, we may also suppose (without loss of
generality) that {$f\mid _{\langle 123\rangle }$}$\equiv 4$, and again by
Lemma \ref{lemma 3.2} we must have {$f\mid _{\langle 213\rangle }$}$\equiv u$%
. First let us suppose $u\neq 4$. Then $f_{zy}$\ preserves $\{1,2,3\}$
except when $f(3,1,2)\nolinebreak =\nolinebreak f(3,1,4)\nolinebreak
=\nolinebreak f(3,4,2)\nolinebreak =\nolinebreak 4$ or $f(3,1,2)\nolinebreak
=\nolinebreak f(3,1,4)\nolinebreak =\nolinebreak f(3,4,1)\nolinebreak
=\nolinebreak 4$ and $f(3,4,2)\nolinebreak =\nolinebreak 1$. In the first
case $\widehat{f}_{zy}\in \left[ 1,2,4;u,u,u\right] $ so it is not a minimal
function by Lemma \ref{lemma 3.2}, while in the second case $f_{zy}\in \left[
1,1,4;u,u,u\right] $, hence $\widehat{f}_{zy}$\ preserves $\{1,2,3\}$. If $%
u=4$ then we have also $f_{zy}\in \left[ 1,2,4;\ast ,\ast ,\ast \right] $ or 
$f_{zy}\in \left[ 1,1,4;\ast ,\ast ,\ast \right] $ or $f_{zy}(\langle
123\rangle )\subseteq \{1,2,3\}$. The first case is impossible, since then
Theorem \ref{thm 3.8} implies that $\widehat{f}_{zy}$\ is isomorphic to $%
M_{2}$, but then $f\notin \lbrack \widehat{f}_{zy}]$ shows that $f$ is not
minimal. (In fact, the clone generated by $M_{2}$ contains no function from $%
S$ except for the first projection.) For $\langle 213\rangle $ we have also
three possibilities: $f_{zy}\in \left[ \ast ,\ast ,\ast ;2,1,4\right] $, $%
f_{zy}\in \left[ \ast ,\ast ,\ast ;2,2,4\right] $ and $f_{zy}(\langle
213\rangle )\subseteq \{1,2,3\}$. The first one of these is impossible for
the same reason as above. In the remaining cases $\widehat{f}_{zy}$\
preserves $\{1,2,3\}$.
\end{proof}

In the following we suppose $f$ to be a nonconservative cyclically
commutative minimal majority function on $A$. In \cite{c3} these are
determined by computer, here we give a straightforward description. We again
suppose {$f\mid_{\langle123\rangle}$}$\equiv4$, and {$f\mid_{\langle213%
\rangle}$}$\equiv u$. First we show that $f$ preserves $\{1,2,4\}$, $%
\{1,3,4\}$ and $\{2,3,4\}$.

\begin{claim}
\label{claim 3.11}The only subset of $A$ not preserved by $f$ is $\{1,2,3\}$.
\end{claim}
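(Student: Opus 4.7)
Since $f$ is a majority function, every singleton and every two-element subset of $A$ is preserved automatically: any triple drawn from such a set has a repeated entry, so the near-unanimity identity pins the output inside the set. Only the four three-element subsets of $A$ therefore need attention. By Claim~\ref{claim 3.10} the function $f$ is cyclically commutative, so on each cyclic orbit $\langle abc\rangle$ it takes a single value $f\mid _{\langle abc\rangle}$, and a three-element subset $\{a,b,c\}\subseteq A$ is preserved precisely when both $f\mid _{\langle abc\rangle}$ and $f\mid _{\langle bac\rangle}$ lie in $\{a,b,c\}$.

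The hypothesis $f\mid _{\langle 123\rangle}\equiv 4$ shows that $\{1,2,3\}$ is indeed not preserved. Assume for contradiction that some other three-element subset is not preserved. Applying a suitable cyclic permutation of $\{1,2,3\}$ inside $A$ (an operation that fixes $4$ as well as both orbits of $\langle 123\rangle$, hence leaves the whole setup intact) we may take the offending set to be $\{1,2,4\}$; by symmetry between the two orbits of $\{1,2,4\}$ we may further assume $f\mid _{\langle 124\rangle}\equiv 3$.

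The plan is now to exhibit a nontrivial $g\in[f]$ that preserves $\{1,2,3\}$; since $f$ does not, Lemma~\ref{lemma 2.1}\ref{2.2} will then contradict the minimality of $f$. The natural candidate is $g=f_{zy}$. Using Lemma~\ref{lemma 3.1} together with $f\mid _{\langle 124\rangle}\equiv 3$ and $f\mid _{\langle 123\rangle}\equiv 4$, we immediately read off $f_{zy}(1,2,3)=f(1,2,4)=3$; the remaining five values of $f_{zy}$ on triples from $\{1,2,3\}$ are expressed, via the chain in Lemma~\ref{lemma 3.1}, either as $u$ (when $u\neq 4$), as an orbital value of $f$ on one of $\{1,2,4\}$, $\{1,3,4\}$, $\{2,3,4\}$, or---should that value be $4$ again---as a further orbital value, with the chain ultimately landing back inside $\langle 124\rangle$ and so yielding the favourable value $3$.

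A finite case-split on $u$ and on the orbital values appearing above now closes the argument: in each case either $f_{zy}$ itself preserves $\{1,2,3\}$, or a small further iterate such as $\widehat{f}_{zy}$ or $f_{zy}\bigl(y,z,f_{zy}(x,y,z)\bigr)$ does, or else the range of that iterate misses $4$ so that Lemma~\ref{lemma 2.1}\ref{2.3} applies instead. The main obstacle will be the subcase $u=4$ (both orbits of $\{1,2,3\}$ mapped to $4$), where the second orbit of $\{1,2,3\}$ is not tamed at first pass and an additional iteration is required, exactly in the style of the proofs of Claims~\ref{claim 3.3}--\ref{claim 3.7}.
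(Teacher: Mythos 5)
Your overall strategy --- reduce to the four three-element subsets via cyclic commutativity and then produce a nontrivial superposition preserving a set that $f$ does not preserve --- is the same as the paper's, but two of your reduction steps do not hold up, and the case analysis that constitutes the actual content of the proof is only asserted, not carried out. First, the claimed ``symmetry between the two orbits of $\{1,2,4\}$'' is not available: any isomorphism or variable permutation that swaps $\langle 124\rangle$ with $\langle 214\rangle$ also swaps $\langle 123\rangle$ with $\langle 213\rangle$, and hence destroys the normalization $f\mid_{\langle 123\rangle}\equiv 4$ unless $u=4$. (The cyclic permutations of $\{1,2,3\}$, which are the only symmetries compatible with that normalization when $u\neq 4$, permute $\langle 124\rangle,\langle 234\rangle,\langle 314\rangle$ among themselves and $\langle 214\rangle,\langle 324\rangle,\langle 134\rangle$ among themselves, so they never exchange the two orbits of a single bad set.) The paper accordingly treats $f\mid_{\langle 214\rangle}\equiv 3$ and $f\mid_{\langle 124\rangle}\equiv 3$ as genuinely distinct subcases, witnessed by different superpositions ($f(y,x,f(x,y,z))^{(2)}$ versus $f_{z}$ or $f(x,f_{z}(x,y,z),z)^{(2)}$). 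Second, your description of the Lemma~\ref{lemma 3.1} chain is wrong: for the triple $(2,3,1)$ the chain reads $f(2,3,1),\ f(2,3,4),\ f(2,4,1),\ f(2,4,3)$, and its last link lies in $f(\langle 243\rangle)$, an orbit of $\{2,3,4\}$, not back in $\langle 124\rangle$; so it need not produce the ``favourable value $3$''.

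Beyond these two errors, the sentence ``a finite case-split \dots\ now closes the argument'' is where the proof actually lives, and you have not performed it. In particular the subcase $u=4$, which you yourself flag as the main obstacle, cannot be closed by the mechanisms you list (a superposition preserving $\{1,2,3\}$, or one whose range misses an element): the paper finds two exceptional functions for which neither device works, and disposes of them differently, by showing that $\widehat{f_{z}}\in[4,4,4;3,3,3]$ preserves $\{1,2,4\}$ via the already-settled case $u\neq 4$, while $f$ by assumption does not preserve $\{1,2,4\}$, so $f\notin[\widehat{f_{z}}]$ and minimality fails. Without that bootstrapping step, and without the explicit verifications in the $u\neq 4$ case, the proposal is a plan rather than a proof.
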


\begin{proof}
We separate two cases upon $u$.%
\renewcommand{\theenumi}{\arabic{enumi}}
\renewcommand{\labelenumi}{\theenumi}%

\begin{caselist}
\item \label{claim 3.11 case 1}{$f\mid _{\langle 213\rangle }$}$\equiv u$ $%
\neq 4$. For contradiction let us suppose that $f$ does not preserve $%
\{1,2,4\}$. Then {$f\mid _{\langle 214\rangle }$}$\equiv 3$ \ or {$f\mid
_{\langle 124\rangle }$}$\equiv 3$. In the first case $f(y,x,f(x,y,z)^{(2)}$
preserves $\{1,2,3\}$ or $\{1,2,4\}$, in the second case $%
f(x,f_{z}(x,y,z),z)^{(2)}$ or $f_{z}$ preserves $\{1,2,3\}$ depending on
whether $4\in \{f(2,3,4),f(3,1,4)\}$ or not.

\item {$f\mid _{\langle 213\rangle }$}$\equiv 4$. What we have already
proved of this claim means that if {$f\mid _{\langle abc\rangle }$}$\equiv d$
and {$f\mid _{\langle bac\rangle }$}$\equiv a$ then $f$ preserves the other
three subsets of $A$, namely $\{a,b,d\}$, $\{a,c,d\}$, $\{b,c,d\}$. So if we
again suppose that $f$ does not preserve $\{1,2,4\}$ then we must have {$%
f\mid _{\langle 124\rangle }$}$\equiv 3$ and {$f\mid _{\langle 214\rangle }$}%
$\equiv 3$. Similarly, {$f\mid _{\langle 234\rangle }$}$\equiv 1$ if and
only if {$f\mid _{\langle 324\rangle }$}$\equiv 1$ \ and {$f\mid _{\langle
134\rangle }$}$\equiv 2$ iff {$f\mid _{\langle 314\rangle }$}$\equiv 2$. One
can check that $f_{z}$ or $\widehat{f_{z}}$ preserves $\{1,2,3\}$ or $%
\{1,2,4\}$ except for only two functions (up to isomorphism and permutation
of variables). For both of them {$f\mid _{\langle 134\rangle }$}$\equiv 3$
and {$f\mid _{\langle 314\rangle }$}$\equiv 4$, and for one we have {$f\mid
_{\langle 234\rangle }$}$\equiv 1$ and {$f\mid _{\langle 324\rangle }$}$%
\equiv 1$, for the other one {$f\mid _{\langle 234\rangle }$}$\equiv 4$ and {%
$f\mid _{\langle 324\rangle }$}$\equiv 3$. In both cases $\widehat{f_{z}}\in %
\left[ 4,4,4;3,3,3\right] $, hence by Case \ref{claim 3.11 case 1} it
preserves $\{1,2,4\}$. We supposed that $f$ does not preserve this set, so $%
f\notin \lbrack \widehat{f}_{z}]$ and this contradicts the minimality of $f$.%
\qedhere
\end{caselist}
\end{proof}

We have proved that if $f\in S$ is a minimal function, then $f$ is
cyclically commutative and preserves all but one three-element subsets of $A$%
. In the following two claims -- as usually -- we suppose that $f$ preserves 
$\{1,2,4\}$, $\{1,3,4\}$, $\{2,3,4\}$ and {$f\mid _{\langle 123\rangle }$}$%
\equiv 4$, {$f\mid _{\langle 213\rangle }$}$\equiv u$. Depending on whether $%
u=4$ or not, we will finally reach $M_{1}$ and $M_{3}$.

\begin{claim}
\label{claim 3.12}If {$f\mid_{\langle213\rangle}$}$\equiv u$ $\neq4$ then $%
\langle A;f\rangle\cong\langle A;M_{3}\rangle$.
\end{claim}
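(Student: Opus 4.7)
The goal is to determine the values of $f$ on all non-majority triples and to recognise the answer as $M_{3}$ up to isomorphism. Since $f\in S$ is cyclically commutative, Lemma~\ref{lemma 3.2} (with the projection case $p$ excluded by cyclic commutativity applied to distinct arguments) forces $f\mid_{\langle abc\rangle}$ to be a single constant for every three-element subset $\{a,b,c\}\subseteq A$. So $f$ is determined by its eight cycle-constants, two of which are already fixed: $f\mid_{\langle 123\rangle}\equiv 4$ and $f\mid_{\langle 213\rangle}\equiv u\in\{1,2,3\}$. A cyclic rotation of $\{1,2,3\}$ fixing $4$ is an automorphism of these hypotheses and conjugates $u$ through $\{1,2,3\}$, so after an isomorphism I may assume $u=3$.

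The three subsets $B\in\{\{1,2,4\},\{1,3,4\},\{2,3,4\}\}$ are preserved by $f$ by Claim~\ref{claim 3.11}, so by Lemma~\ref{lemma 2.1}\eqref{2.1} each $f\mid_{B}$ is a minimal majority function on $B$. Being also cyclically commutative, $f\mid_{B}$ is isomorphic either to $m_{1}$ (both cycles of $B$ receive the same constant in $B$) or to $m_{3}$ (the two cycles receive distinct constants in $B$). This reduces the six remaining cycle-constants to a small finite list of candidate configurations.

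The remaining (and main) work is to eliminate all but one candidate by producing superpositions that contradict minimality. Following the pattern of Claims~\ref{claim 3.3}--\ref{claim 3.7} and \ref{claim 3.10}--\ref{claim 3.11}, for each candidate I would form a suitable superposition --- typically $f_{zy}$, $f_{z}$, $f(z,y,f(x,y,z))$, or one of their $\widehat{g}$-closures --- and use Lemma~\ref{lemma 3.1} to read off its action on the cycle-constants. In each case the superposition either preserves $\{1,2,3\}$, contradicting the minimality of $f$ by Lemma~\ref{lemma 2.1}\eqref{2.2} (since $f$ does not preserve $\{1,2,3\}$), or falls under Lemma~\ref{lemma 3.2} and is itself seen to be nonminimal. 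The main obstacle is the sheer volume of case-checking; one must track how each superposition redistributes the eight cycle-constants under the various $m_{1}/m_{3}$ assignments on the three preserved subsets, and exhibit a killer superposition in each excluded case. The unique surviving configuration can then be matched against the $M_{3}$ column of Theorem~\ref{thm 1.1} via the transposition $(1\,2)$ of $A$ (which swaps the two cycles of $\{1,2,3\}$ and of $\{1,3,4\}$ while leaving the other two three-subsets unchanged), yielding the claimed isomorphism $\langle A,f\rangle\cong\langle A,M_{3}\rangle$.
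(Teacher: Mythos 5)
Your setup is sound and matches the paper's: cyclic commutativity makes $f$ constant on each of the eight cycles $\langle abc\rangle$ (you do not even need Lemma \ref{lemma 3.2} for this -- it is the definition), the rotation $(1\,2\,3)$ lets you normalise $u=3$, and Lemma \ref{lemma 2.1} \ref{2.1} constrains the restrictions of $f$ to the three preserved three-element subsets. But the proof stops exactly where the actual content of Claim \ref{claim 3.12} begins. Everything after ``the remaining (and main) work is to eliminate all but one candidate'' is a description of what a proof would look like, not a proof: you never exhibit a single concrete superposition, never verify for a single candidate configuration that the superposition preserves $\{1,2,3\}$ or violates Lemma \ref{lemma 3.2}, and never establish which configurations survive. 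Since the claim is precisely the assertion that this finite check comes out in favour of $M_{3}$, deferring the check leaves the claim unproved.

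Two further points. First, your reduction is coarser than the paper's. Classifying each $f\mid_{B}$ as $m_{1}$-like or $m_{3}$-like leaves nine possibilities per preserved subset, hence $9^{3}=729$ configurations, and you offer no mechanism for cutting this down. The paper's opening move is sharper: it shows directly that $f\mid_{\langle 124\rangle}\equiv f\mid_{\langle 314\rangle}\equiv f\mid_{\langle 234\rangle}\equiv 4$ (otherwise $\widehat{f}_{zy}$ preserves $\{1,2,3\}$), which pins three of the six unknown cycle-constants at once and leaves only three free values; these are then handled via the auxiliary function $g(x,y,z)=f(y,x,f(x,y,z))$, a range argument using \ref{2.3}, and an explicit list of nine residual cases, each killed by a named superposition. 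Second, ``the unique surviving configuration'' is not what happens: several configurations survive, all isomorphic to $M_{3}$ (the clone $[M_{3}]$ itself contains two cyclically commutative majority functions), and one must argue that $f$ itself -- not merely some superposition of $f$ -- is isomorphic to $M_{3}$, which the paper does by noting that $[M_{3}]$ contains only two functions from $S$. To complete your argument you would need to carry out the elimination explicitly, case by case.
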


\begin{proof}
We can suppose {$f\mid _{\langle 213\rangle }$}$\equiv 3$ without loss of
generality. We also suppose {$f\mid _{\langle 124\rangle }$}$\equiv 4$,\ {$%
f\mid _{\langle 314\rangle }$}$\equiv 4$,\ {$f\mid _{\langle 234\rangle }$}$%
\equiv 4$, since otherwise $\widehat{f}_{zy}$ preserves $\{1,2,3\}$. For $%
g(x,y,z)=f(y,x,f(x,y,z))$ we have $g\in \left[
f(1,2,4),f(3,2,4),f(1,3,4);4,3,3\right] $. If none of $f(1,2,4)$, $f(3,2,4)$%
, $f(1,3,4)$ equals $4$, then $g^{(3)}$ preserves $\{1,2,3\}$. If there is a 
$4$ amongst them, but $3$ does not appear, then we put $%
h(x,y,z)=f(g(x,y,z),g(z,y,x),g(x,z,y))$ and one calculate that the range of $%
h^{(2)}$ does not contain $4$, hence $f$ is not minimal by \ref{2.3}. Only
nine functions remain; for two of them $g$ is isomorphic to $M_{3}$, hence $%
f $ is also. (The clone generated by $M_{3}$ contains only two functions
from $S$, and only one of them can be equal to $f$.) If $g\in \left[
2,4,3;4,4,3\right] $ then $g(y,g(y,z,x),g(x,y,z))^{(2)}$ preserves $%
\{1,2,3\} $, if $g\in \left[ 1,3,4;4,4,3\right] $ then $%
g(g(x,y,z),y,g(y,z,x))^{(2)}$ does so. In the remaining five cases $%
\{1,2,3\} $ is preserved by $f(g(x,y,z),g(z,x,y),g(y,z,x))$.
\end{proof}

\begin{claim}
\label{claim 3.13}If {$f\mid_{\langle213\rangle}$}$\equiv4$ \ then $\langle
A;f\rangle\cong\langle A;M_{1}\rangle$.
\end{claim}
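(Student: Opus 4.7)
The plan is to show that, under the stated hypotheses, $f$ maps every pairwise distinct triple of $A=\{1,2,3,4\}$ to $4$; since $M_{1}$ is precisely the majority function with this property on $\{1,2,3,4\}$, this yields $\langle A;f\rangle\cong\langle A;M_{1}\rangle$.

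I first invoke Claim~\ref{claim 3.11}: $f$ preserves each of the three three-element subsets $B_{1}=\{1,2,4\}$, $B_{2}=\{1,3,4\}$, $B_{3}=\{2,3,4\}$. By Lemma~\ref{lemma 2.1}.\ref{2.1}, each restriction $f\mid_{B_{i}}$ is itself a minimal majority function on $B_{i}$. Cyclic commutativity of $f$ forces $f\mid_{B_{i}}$ to be constant on each of its two cyclic orbits, so writing $\{a,b\}=B\setminus\{4\}$ we may set $f\mid_{\langle ab4\rangle}\equiv\alpha_{B}$ and $f\mid_{\langle ba4\rangle}\equiv\beta_{B}$ with $\alpha_{B},\beta_{B}\in B$. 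The description of minimal cyclically commutative majority functions on a three-element set (the $m_{1}$- and $m_{3}$-types, recalled after Lemma~\ref{lemma 2.3}) then forces each pair $(\alpha_{B},\beta_{B})$ to be either a constant pair $(u,u)$ or a pair of two distinct elements of $B$.

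The heart of the argument is to show that $(\alpha_{B},\beta_{B})=(4,4)$ for each of $B_{1},B_{2},B_{3}$. Suppose for contradiction some $(\alpha_{B},\beta_{B})\neq(4,4)$. The hypothesis on $f$ is invariant under the action of $\mathrm{Sym}(\{1,2,3\})$ extended by $\pi(4)=4$; this group acts transitively on $\{B_{1},B_{2},B_{3}\}$ and, by transpositions, swaps the two orbits of each $B_{i}$. Hence without loss of generality $(\alpha_{B_{1}},\beta_{B_{1}})\neq(4,4)$. Proceeding in the style of Claims~\ref{claim 3.3}--\ref{claim 3.7} and~\ref{claim 3.12}, I would then compute, via Lemma~\ref{lemma 3.1}, the values of the compositions $f_{z}$, $f_{zy}$, $\widehat{f}_{zy}$ and the more refined superpositions such as $g(x,y,z)=f(y,x,f(x,y,z))$ and its iterates on the six pairwise distinct triples in $\{1,2,3\}^{3}$. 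For every admissible configuration of the parameters $(\alpha_{B},\beta_{B})_{B}$ other than the all-$(4,4)$ one, I would verify that one of three obstructions occurs: (a) some nontrivial composition takes its values in $\{1,2,3\}$ on every triple of $\{1,2,3\}^{3}$ and so preserves $\{1,2,3\}$, contradicting Lemma~\ref{lemma 2.1}.\ref{2.2} because $f$ does not; (b) the range of some nontrivial composition misses $4\in\mathrm{range}(f)$, contradicting minimality via Lemma~\ref{lemma 2.1}.\ref{2.3}; or (c) some composition turns out isomorphic to $M_{2}$ or $M_{3}$, whose clones, as noted in the proofs of Claim~\ref{claim 3.10} and Claim~\ref{claim 3.12}, contain no function of the shape of $f$.

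The principal difficulty is the combinatorial bookkeeping in this final step: a priori there are up to $9^{3}$ parameter configurations, but the $\mathrm{Sym}(\{1,2,3\})$-symmetry together with the symmetry between the two orbits of each $B_{i}$ collapses the list dramatically, and the routine composition $f_{zy}$ (analogously to its role in Claims~\ref{claim 3.3}--\ref{claim 3.4}) dispatches the bulk of the remaining cases in a single stroke, leaving only a handful of borderline configurations that require the finer superpositions in the style of Claim~\ref{claim 3.12}. Once every alternative has been excluded, $(\alpha_{B},\beta_{B})=(4,4)$ holds for each $B$, so $f$ maps every pairwise distinct triple of $A$ to $4$ and $\langle A;f\rangle\cong\langle A;M_{1}\rangle$.
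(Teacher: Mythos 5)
Your setup is sound and matches the paper's: under the standing hypotheses ($f$ cyclically commutative, preserving $\{1,2,4\}$, $\{1,3,4\}$, $\{2,3,4\}$ but not $\{1,2,3\}$, with $f\mid_{\langle 123\rangle}\equiv f\mid_{\langle 213\rangle}\equiv 4$), the claim does reduce to showing that the six remaining orbit-values --- your pairs $(\alpha_{B},\beta_{B})$ for the three subsets $B$ containing $4$ --- all equal $4$, and your observation that each pair is either constant or consists of two distinct elements is correct. The problem is that the decisive step is never carried out: everything from ``I would then compute'' onward is a plan, not an argument. You do not identify which superposition eliminates which configuration, nor verify that your three obstructions (a)--(c) actually cover every configuration other than the all-$4$ one; you only assert that a case analysis ``in the style of'' earlier claims would succeed. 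Since this case elimination \emph{is} the content of the claim, the proposal has a genuine gap.

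For comparison, the paper makes the case split tractable by grouping the six unknown values into two sets $U=\{f(1,2,4),f(3,1,4),f(2,3,4)\}$ and $V=\{f(2,1,4),f(1,3,4),f(3,2,4)\}$ (one cyclic orbit from each of the three subsets in each), and then needs only four cases: if $U\neq\{4\}$ and $V\neq\{4\}$ then $\widehat{f}_{zy}$ preserves $\{1,2,3\}$; if $U=V=\{4\}$ then $f=M_{1}$; and in a mixed case, say $U=\{4\}\neq V$, either $4\notin V$ and $\widehat{f}_{zy}$ fails to be minimal by Lemma~\ref{lemma 3.2}, or else Claim~\ref{claim 3.12} applies to $\widehat{f}_{zy}$ and forces it to be isomorphic to $M_{3}$, whose clone contains no function isomorphic to $f$. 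To salvage your version you would need to exhibit concrete witnesses of this kind for each surviving configuration, or find a comparably coarse invariant such as $U$ and $V$ that collapses the $9^{3}$ configurations to a handful of cases. Note also that your obstruction (c) quietly relies on knowing all the majority functions in $[M_{2}]$ and $[M_{3}]$; this must be justified (as the paper does via the explicit tables) rather than assumed.
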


\begin{proof}
Let $U=\{f(1,2,4),f(3,1,4),f(2,3,4)\}$ and $V=\{f(2,1,4),f(1,3,4),\linebreak
f(3,2,4)\}$. If $U\neq \{4\}$ and $V\neq \{4\}$, then $\widehat{f}_{zy}$\
preserves $\{1,2,3\}$. If $U=V=\{4\}$ then $f=M_{1}$. Now let us suppose $%
U=\{4\}\neq V$. If $4\notin V$ then $\widehat{f}_{zy}$\ is not minimal by
Lemma \ref{lemma 3.2}. If this is not the case then by the previous claim $%
\widehat{f}_{zy}$\ is isomorphic to $M_{3}$, but the clone $[M_{3}]$
contains no function which isomorphic to $f$. The case $V=\{4\}\neq U$ is
similar.
\end{proof}

\subsection*{3.3}

$~\smallskip $

We have now -- up to isomorphism -- only three functions: $M_{1},M_{2},M_{3}$%
, and these generate minimal clones.

\begin{theorem}
\label{thm 3.14}$M_{1},M_{2},M_{3}$ are minimal functions on $\{1,2,3,4\}$.
\end{theorem}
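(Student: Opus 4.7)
The plan is to reduce the minimality of each clone $[M_i]$ to a bounded verification. By the observation from Section~2 that any minimal subclone of a majority clone is itself a majority clone (since Rosenberg's list contains no other ternary near-unanimity function), $[M_i]$ is minimal iff every ternary majority $g\in[M_i]$ satisfies $M_i\in[g]$. The proof therefore splits into two parts: (A) enumerate the ternary majority functions in each $[M_i]$, and (B) for each such $g$, exhibit an explicit term over $g$ equal to $M_i$.

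For part~(A) I would exploit preserved relations. Reading off the table in Theorem~\ref{thm 1.1}, each $M_i$ preserves exactly the three three-element subsets $\{1,2,4\}$, $\{1,3,4\}$, $\{2,3,4\}$ of $A$ (only $\{1,2,3\}$ fails, since $M_i(1,2,3)\in\{3,4\}$ for every~$i$). By~\eqref{1.1} every $g\in[M_i]$ preserves the same subsets, and by Lemma~\ref{lemma 2.1} each restriction $g|_B$ lies in the minimal three-element majority clone generated by $M_i|_B$, whose members are listed explicitly in the table following Lemma~\ref{lemma 2.1}. This determines $g$ on every triple whose underlying set is not $\{1,2,3\}$, leaving only the six values $g(1,2,3),g(2,3,1),g(3,1,2),g(2,1,3),g(1,3,2),g(3,2,1)$ to analyse. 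A case split — using the superpositions $g_x,g_y,g_z,g_{zy},\widehat{g}$ together with Lemmas~\ref{lemma 2.1}, \ref{lemma 2.3} and~\ref{lemma 3.2} — should eliminate all possibilities except: only $M_1$ itself for $i=1$; exactly three functions containing $M_2$ for $i=2$ (as foreshadowed in Claim~\ref{claim 3.7}); and a similarly short list containing $M_3$ for $i=3$ (as foreshadowed in Claim~\ref{claim 3.12}).

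For part~(B), having a short explicit list in hand, I would in each case write down a concrete composition over $g$ producing $M_i$. The natural building blocks are again $g_x,g_y,g_z,\widehat{g}$ and their one- or two-step iterates, already proven effective throughout Section~3; since (A) leaves only a handful of $g$'s for each $i$, this step reduces to a finite check, and is immediate for $i=1$ (where the only ternary majority in $[M_1]$ is $M_1$ itself).

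The main obstacle is the combinatorial bookkeeping in step~(A), most acutely for $i=3$, where the preserved-subset constraints still leave several candidate value patterns on $\{1,2,3\}$. Each candidate must either be eliminated — typically by producing a superposition whose range omits an element of $\range(M_3)$ (via Lemma~\ref{lemma 2.1}) or a superposition preserving a subset that $M_3$ does not — or matched to a member of the short list and shown to generate $M_3$. The toolkit developed in Sections~2 and~3.1, namely Theorem~\ref{thm 2.2}, Lemma~\ref{lemma 3.2}, and the superpositions $f_x,f_y,f_z,f_{zy},\widehat{f}$, is exactly what is needed; a systematic if tedious pass dispatches every case, and the computer enumeration referenced in~\cite{c3} would accomplish the same task mechanically.
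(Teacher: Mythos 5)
Your reduction in the first paragraph (a minimal subclone of a majority clone is a majority clone, so $[M_i]$ is minimal iff every majority $g\in[M_i]$ generates $M_i$) matches the paper's starting point, but after that the proposal leaves the two substantive steps as unexecuted ``finite checks,'' and it misses the single observation that makes them unnecessary. The paper notes that each $M_i$ preserves the equivalence relation with blocks $\{1,4\},\{2\},\{3\}$ and that $1\notin\range(M_i)$; by \eqref{1.1} and \ref{2.3} every majority $g\in[M_i]$ inherits both properties. The first property forces $g(1,b,c)$ to lie in the $\theta$-block of $g(4,b,c)$ for $\{b,c\}\subseteq\{2,3\}$, and the second rules out the value $1$, so $g\mid_{\{1,2,3\}}$ is \emph{completely determined} by $g\mid_{\{2,3,4\}}$ (and $g$ is constant $4$ on $\{1,2,4\}$ and $\{1,3,4\}$ for the same reason). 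This eliminates your entire case split on the six values $g(1,2,3),\ldots,g(3,2,1)$. It also dissolves part (B): since $M_i\mid_{\{2,3,4\}}\cong m_i$ is a minimal function on a three-element set and $g\mid_{\{2,3,4\}}$ is a nontrivial (majority) member of the clone it generates, some composition of $g$ restricts on $\{2,3,4\}$ to $M_i\mid_{\{2,3,4\}}$; by the determination just described that composition equals $M_i$ on all of $A$. No explicit terms need to be exhibited.

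Beyond the missing invariant, the elimination logic you sketch for part (A) is backwards in places. To exclude a candidate $g$ from $[M_3]$ you must exhibit a relation that $M_3$ preserves and $g$ does not. Showing instead that $g$ \emph{would generate} a superposition preserving a set $M_3$ does not preserve, or one whose range omits an element of $\range(M_3)$, does not eliminate $g$: if such a $g$ actually lay in $[M_3]$, that argument would prove $[M_3]$ non-minimal, which is the negation of the theorem, not a step toward it. Those tools (Lemma \ref{lemma 3.2}, the superpositions $f_{zy}$, $\widehat{f}$, etc.) are the paper's instruments for \emph{disproving} minimality in Sections 3.1--3.2; they cannot be reused unchanged to certify membership or non-membership in $[M_i]$ here. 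As written, part (A) therefore does not close, and part (B) is asserted rather than proved; the proposal is a plausible plan but not a proof.
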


\begin{proof}
The proof is the same for all the three functions, so let $f$ be any of
them. This function preserves the equivalence relation whose blocks are $%
\{1,4\},\{2\},\{3\}$, and its range does not contain the element $1$.
According to (\ref{1.1}) and \ref{2.3}, the same is valid for an arbitrary
majority function $g$ in $[f]$. These properties determine {$g\mid
_{\{1,2,3\}}$} provided {$g\mid _{\{2,3,4\}}$} is given. Since $f$ preserves 
$\{2,3,4\}$ and $f${$\mid _{\{2,3,4\}}$} is minimal, there exists an $h\in
\lbrack g]$ such that {$h\mid _{\{2,3,4\}}$}$=${$f\mid _{\{2,3,4\}}$}. Now $%
h $ has also the above mentioned two properties, so {$h\mid _{\{1,2,3\}}$}
is uniquely determined: it can be nothing else than {$f\mid _{\{1,2,3\}}$}.
On $\{1,2,4\}$ and on $\{1,3,4\}$ $f$ is constant $4$, consequently so are $%
g $ and $h$, hence $h=f$. Thus, for arbitrary $g\in \lbrack f]$, $f\in
\lbrack g] $ also holds, proving that $[f]$ is a minimal clone.
\end{proof}

\begin{remark}
From the proof it is clear, that restriction to $\{2,3,4\}$ gives a
one-to-one correspondence between the majority functions in $[M_{i}]$ and $%
[m_{i}]$. Hence these clones contain also 1,3 or 8 majority functions. They
can be seen in the following table.
\end{remark}

\begin{equation*}
\bigskip 
\begin{tabular}{|c|c|ccc|cccccccc|}
\hline
& $M_{1}$ & $M_{2}$ &  &  & $M_{3}$ &  &  &  &  &  &  &  \\ \hline
$(1,2,3)$ & $4$ & $4$ & $2$ & $3$ & $3$ & $3$ & $4$ & $3$ & $4$ & $4$ & $3$
& $4$ \\ 
$(2,3,1)$ & $4$ & $2$ & $3$ & $4$ & $3$ & $4$ & $3$ & $3$ & $4$ & $3$ & $4$
& $4$ \\ 
$(3,1,2)$ & $4$ & $3$ & $4$ & $2$ & $3$ & $3$ & $3$ & $4$ & $4$ & $4$ & $4$
& $3$ \\ \hline
$(2,1,3)$ & $4$ & $2$ & $4$ & $3$ & $4$ & $3$ & $4$ & $4$ & $3$ & $4$ & $3$
& $3$ \\ 
$(1,3,2)$ & $4$ & $4$ & $3$ & $2$ & $4$ & $4$ & $4$ & $3$ & $3$ & $3$ & $3$
& $4$ \\ 
$(3,2,1)$ & $4$ & $3$ & $2$ & $4$ & $4$ & $4$ & $3$ & $4$ & $3$ & $3$ & $4$
& $3$ \\ \hline
$\{1,2,4\}$ & $4$ & $4$ & $4$ & $4$ & $4$ & $4$ & $4$ & $4$ & $4$ & $4$ & $4$
& $4$ \\ \hline
$\{1,3,4\}$ & $4$ & $4$ & $4$ & $4$ & $4$ & $4$ & $4$ & $4$ & $4$ & $4$ & $4$
& $4$ \\ \hline
$(4,2,3)$ & $4$ & $4$ & $2$ & $3$ & $3$ & $3$ & $4$ & $3$ & $4$ & $4$ & $3$
& $4$ \\ 
$(2,3,4)$ & $4$ & $2$ & $3$ & $4$ & $3$ & $4$ & $3$ & $3$ & $4$ & $3$ & $4$
& $4$ \\ 
$(3,4,2)$ & $4$ & $3$ & $4$ & $2$ & $3$ & $3$ & $3$ & $4$ & $4$ & $4$ & $4$
& $3$ \\ \hline
$(2,4,3)$ & $4$ & $2$ & $4$ & $3$ & $4$ & $3$ & $4$ & $4$ & $3$ & $4$ & $3$
& $3$ \\ 
$(4,3,2)$ & $4$ & $4$ & $3$ & $2$ & $4$ & $4$ & $4$ & $3$ & $3$ & $3$ & $3$
& $4$ \\ 
$(3,2,4)$ & $4$ & $3$ & $2$ & $4$ & $4$ & $4$ & $3$ & $4$ & $3$ & $3$ & $4$
& $3$ \\ \hline
\end{tabular}%
\end{equation*}


\begin{thebibliography}{9}
\bibitem{c1} B. Cs\'{a}k\'{a}ny, \textit{All minimal clones on the
three-element set,} Acta Cybernet. \textbf{6} (1983), no. 3, 227--238.

\bibitem{c2} B. Cs\'{a}k\'{a}ny, \textit{On conservative minimal operations, 
}Lectures in Universal Algebra (Szeged, 1983), Colloq. Math. Soc. J\'{a}nos
Bolyai, 43, North-Holland, Amsterdam, 1986, 49--60.

\bibitem{c3} Z. Csibor: \textit{Majority functions}, Masters Thesis (Szeged,
1988) (In Hungarian).

\bibitem{c4} E. Post, \textit{The two-valued iterative systems of
mathematical logic, }Annals of Mathematics Studies, no. 5, Princeton
University Press, Princeton, 1941.

\bibitem{c5} R. W. Quackenbush: \textit{Some remarks on categorical algebras}%
, Algebra Universalis \textbf{2} (1972), 246.

\bibitem{c6} R. W. Quackenbush: \textit{A survey of minimal clones},
Aequationes Mathematicae \textbf{50} (1995), 3-16.

\bibitem{c7} I. G. Rosenberg, \textit{Minimal clones I. The five types,}
Lectures in Universal Algebra (Szeged, 1983), Colloq. Math. Soc. J\'{a}nos
Bolyai, 43, North-Holland, Amsterdam, 1986, 405--427.

\bibitem{c8} B. Szczepara, \textit{Minimal clones generated by groupoids, }%
Ph.D. Thesis, Universit\'{e} de Montr\'{e}al, 1995.

\bibitem{c9} \'{A}. Szendrei, \textit{Clones in Universal Algebra,} S\'{e}%
minaire de Math\'{e}matiques Sup\'{e}rieures, \textbf{99}, Presses de
L'Universit\'{e} de Montr\'{e}al, 1986.
\end{thebibliography}
\end{document}